\newtheorem{thm}{Theorem}
\newtheorem{lem}[thm]{Lemma}
\newtheorem{prop}[thm]{Proposition}
\theoremstyle{definition}
\newtheorem{rem}[thm]{Remark}
\newcommand{\RR}{\mathbb{R}}
\newcommand{\NN}{\mathbb{N}}
\newcommand{\ZZ}{\mathbb{Z}}
\newcommand{\cN}{\mathcal{N}}
\newcommand{\cO}{\mathcal{O}}
\newcommand{\cH}{\mathcal{H}}
\newcommand{\cD}{\mathcal{D}}
\newcommand{\one}{1}
\DeclareMathOperator{\ran}{ran}
\title[]{Spectral asymptotics for $\delta$-interactions on sharp cones}
\author{Thomas Ourmi\`eres-Bonafos}
\address{Laboratoire de Math\'ematiques d'Orsay, Univ.~Paris-Sud, CNRS, Universit\'e Paris-Saclay, 91405 Orsay, France}
\email{thomas.ourmieres-bonafos@math.u-psud.fr}
\urladdr{http://www.math.u-psud.fr/~ourmieres-bonafos/}
\author{Konstantin Pankrashkin} 
\address{Laboratoire de Math\'ematiques d'Orsay, Univ.~Paris-Sud, CNRS, Universit\'e Paris-Saclay, 91405 Orsay, France
\& Laboratoire Poems, INRIA, ENSTA ParisTech, 828, Boulevard des Mar\'echaux, 91762 Palaiseau, France}
\email{konstantin.pankrashkin@math.u-psud.fr}
\urladdr{http://www.math.u-psud.fr/~pankrashkin/}
\author{Fabio Pizzichillo}
\address{BCAM -- Basque Center for Applied Mathematics, Mazarredo, 14, 48009 Bilbao Basque Country, Spain}
\email{fpizzichillo@bcamath.org}
\urladdr{http://www.bcamath.org/en/people/fpizzichillo}
\begin{document}

\keywords{Schr\"odinger operator, $\delta$-interaction, conical surface, eigenvalue, asymptotic analysis}

\begin{abstract}
We investigate the spectrum of three-dimensional Schr\"{o}dinger operators with $\delta$-interactions of constant strength supported on circular cones. As shown in earlier works, such operators have infinitely many eigenvalues below the threshold of the essential spectrum. We focus on spectral properties for sharp cones, that is when the cone aperture goes to zero, and we describe the asymptotic behavior of the eigenvalues and of the eigenvalue counting function.
A part of the results are given in terms of numerical constants appearing as solutions of transcendental equations involving modified Bessel functions.
\end{abstract}                                                            

\maketitle

\section{Introduction and main results}
%
%
%
%

%
%
%
%
For $\theta \in \big(0,\frac{\pi}{2}\big)$ we introduce the conical surface of half-aperture $\theta$ defined by
	\[
		C_\theta:= \Big\{(x_1,x_2,x_3)\in\mathbb{R}^3 : x_3= \cot \theta \sqrt{x_1^2 + x_2^2} \,\Big\}.
	\]
In the present paper
we are interested in some spectral properties of the Schr\"odinger operator with an attractive
$\delta$-potential supported on $C_\theta$.
The respective operator $L_{\theta,\alpha}$ is the unique self-adjoint operator in $L^2(\mathbb R^3)$
associated with the closed lower semibounded sesquilinear form $\ell_{\theta,\alpha}$ given by
	\[
		\ell_{\,\theta,\alpha}(u,u) = \iiint_{\RR^3}|\nabla u|^2\, dx - \alpha\iint_{C_\theta} |u|^2 d\sigma,\quad u\in H^1(\RR^3),
	\]
where $\sigma$ is the two-dimensional Hausdorff measure on $C_\theta$ and $\alpha>0$ is a constant measuring the strentgh
of the interaction. Informally, the operator $L_{\theta,\alpha}$ acts as the Laplacian, $u\mapsto -\Delta u$, in $\RR^3\setminus C_\theta$
on the functions $u$ satisfying the boundary condition $[\partial u]+\alpha u=0$ on $\Sigma$, where $[\partial u]$
is a suitably defined jump of the normal derivative, see \cite{e08} for details. As the conical surface $C_\theta$ is invariant with respect to the dilations, the operator $L_{\theta,\alpha}$
is unitarily equivalent to $\alpha^2 L_{\theta,1}$, thus in what follows we restrict ourselves to the study
of the operator $L_{\theta}:=L_{\theta,1}$ and of the form $\ell_\theta:=\ell_{\theta,1}$.
It seems that the operator $L_{\theta}$ was first considered in \cite{BEL14}:
it was shown that its essential spectrum covers the half-axis $[-\frac{1}{4},+\infty)$
and it has infinite many eigenvalues in $(-\infty,-\frac{1}{4})$.
It was shown in \cite{LOB} that the eigenvalues are increasing
in $\theta$ and that the associated eigenfunctions are invariant under the rotations around the $x_3$-axis,
and the accumulation rate of the eigenvalues to the bottom
of the essential spectrum was described: if $\cN(L_\theta,E)$ stands for the number of eigenvalues of
$L_\theta$ in $(-\infty,E)$, then
\[
\cN\Big(L_\theta,-\dfrac{1}{4}-E\Big)= \dfrac{\cot \theta}{4\pi} |\log E| + o(\log E) \text{ for } E\to 0^+.
\]
The results were then extended to $\delta$-potentials supported by non-circular conical surfaces
in \cite{el,obp}, and we refer to \cite{bpp,dobr,et,klob,p16}
for the discussion of other types of differential operators
in conical geometries. The goal of the present paper is to describe
the behavior of the eigenvalues of $L_\theta$ for the sharp cones, i.e. for the case $\theta\to 0^+$.

In order to present the main results we need to introduce several numerical constants.
As usual, by $I_n$ and $K_n$ we denote the $n$-th order modified Bessel functions.
Let $A>0$ be the unique (as shown below) solution to
\begin{equation}\label{eq-A}
I_0(A) K_0(A)+A\Big(I_1(A)K_0(A)-I_0(A)K_1(A)\Big)=0, \quad A\simeq 1.0750,
\end{equation}
and set
\begin{equation}
\label{eq-xi0}
\begin{aligned}
a_0&:=A^2 I_0(A)^2K_0(A)^2\simeq 0.2845,\\
a_1&= a_0\sqrt{\dfrac{1\mathstrut}{2A^2}+\dfrac{1}{2a_0}-2}\simeq 0.1241,\\
	\xi_0&:=\dfrac{1}{\sqrt{2} I_0(A)K_0(A)}\equiv \dfrac{1}{\sqrt{2 a_0}}\simeq 1.4252.
\end{aligned}
\end{equation}

Denote the $n$-th eigenvalue of $L_\theta$ by $E_n(L_\theta)$, then the behavior
of the individual eigenvalues is as follows:
\begin{thm}\label{thm:mainth}
For any fixed $n\in \mathbb N$ one has
\[
E_n(L_\theta) = -a_0 + a_1(2n-1) \theta + \mathcal{O}(\theta^{\frac{3}{2}}) \text{ as }\theta\to 0^+.
\]
\end{thm}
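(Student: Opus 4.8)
The overall plan is to reduce $L_\theta$, using the rotational symmetry and a change of coordinates, to an explicit semiclassical one-dimensional Schr\"odinger operator whose effective potential is expressed through modified Bessel functions, and then to analyse that operator near the bottom of its potential well; the constants $A$, $a_0$, $a_1$ should come out of the fibre analysis. Concretely: by \cite{LOB} every eigenfunction of $L_\theta$ is invariant under rotations about the $x_3$-axis, so the whole discrete spectrum lives on the axisymmetric subspace, on which $L_\theta$ acts on the meridian half-plane $\{\rho>0\}$ as $-\partial_\rho^2-\partial_{x_3}^2-\rho^{-1}\partial_\rho$ on $L^2(\rho\,\dd\rho\,\dd x_3)$, with a $\delta$-interaction of strength $1$ on the ray $\{x_3=\rho\cot\theta\}$. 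I would rotate the $(\rho,x_3)$-plane so that this ray becomes the $r$-axis, obtaining Fermi coordinates $(r,s)$ in which the interaction sits on $\{s=0\}$, the distance to the rotation axis is $w:=r\sin\theta+s\cos\theta$, and the physical region is $\{w>0\}$; the unitary transformation $u=w^{-1/2}v$ flattens the measure and, since $\Delta\sqrt w/\sqrt w=-1/(4w^2)$, turns $L_\theta$ into $-\partial_r^2-\partial_s^2-1/(4w^2)$ with a unit-strength $\delta$ on $\{s=0\}$ and the Friedrichs realisation at the axis $\{w=0\}$ (semiboundedness and the recovery of the threshold $-\tfrac14$ follow from Hardy's inequality on half-lines). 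The scaling $r=\xi/\sin\theta$ finally produces the semiclassical operator
\[
\mathcal L_\theta=-\sin^2\!\theta\,\partial_\xi^2-\partial_s^2-\frac1{4(\xi+s\cos\theta)^2}-\delta(s)\quad\text{on }\ \{\xi+s\cos\theta>0\},
\]
unitarily equivalent to $L_\theta$ on its axisymmetric subspace, with small parameter $\sin\theta\to 0$.

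At $\sin\theta=0$ the operator $\mathcal L_\theta$ becomes the direct integral over $\xi>0$ of the fibres $h(\xi)=-\partial_s^2-1/(4(\xi+s)^2)-\delta(s)$ on $L^2((-\xi,\infty))$ with Friedrichs condition at $s=-\xi$. By Hardy's inequality $\sigma_{\mathrm{ess}}(h(\xi))=[0,\infty)$, and $h(\xi)$ carries exactly one eigenvalue $\mu(\xi)<0$. With $x=\xi+s$ and $\kappa=\sqrt{-\mu}$ the admissible solutions of $-f''-f/(4x^2)=\mu f$ are $\sqrt x\,I_0(\kappa x)$ on $(0,\xi)$ and $\sqrt x\,K_0(\kappa x)$ on $(\xi,\infty)$; imposing continuity and the jump $[f']=-f$ at $x=\xi$ and using the Wronskian relation $I_1(z)K_0(z)+I_0(z)K_1(z)=1/z$, the matching conditions collapse to $\xi\,I_0(A)K_0(A)=1$ and $\kappa=A\,I_0(A)K_0(A)$ with $A:=\kappa\xi$. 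Hence $\mu(\xi)=-g(A(\xi))^2$ with $g(A):=A\,I_0(A)K_0(A)$; one then checks that $\xi\mapsto A(\xi)$ is a smooth increasing bijection of $(0,\infty)$ and that $\mu$ is smooth, negative, and tends to $0$ and to $-\tfrac14$ as $\xi\to 0^+$ and $\xi\to\infty$ respectively. Minimising $\mu$ is the same as maximising $g$, and since the product rule together with $I_0'=I_1$, $K_0'=-K_1$ gives $g'(A)=I_0(A)K_0(A)+A\big(I_1(A)K_0(A)-I_0(A)K_1(A)\big)$, the critical points of $g$ are exactly the solutions of \eqref{eq-A}; I would show that \eqref{eq-A} has a unique positive root $A$, which is a nondegenerate global maximum ($g''(A)<0$). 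Then $\min_\xi\mu(\xi)=-g(A)^2=-a_0$ is attained at a single nondegenerate point $\xi_0$ (consistent with \eqref{eq-xi0}), and differentiating twice — using $g'(A)=0$, the Wronskian relation, and the identity $(I_1K_0)(I_0K_1)=(I_0K_0)(I_1K_1)$ — yields $A'(\xi_0)=g(A)$ and $\mu''(\xi_0)=-2g(A)^3g''(A)$, which a direct computation identifies with $2a_1^2$, $a_1$ as in \eqref{eq-xi0}.

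For $\sin\theta>0$ I would carry out a Born--Oppenheimer reduction along the normalised fibre ground states $\phi_\xi$ of $h(\xi)$: on a fixed neighbourhood of $\xi_0$ the gap of $h(\xi)$ above $\mu(\xi)$ equals $-\mu(\xi)\ge a_0/2$, so projecting onto $\phi_\xi$ replaces $\mathcal L_\theta$, up to controlled errors, by the one-dimensional operator $-\sin^2\!\theta\,\partial_\xi^2+\mu(\xi)$ on $L^2(0,\infty)$. Since $\mu$ has the nondegenerate minimum $-a_0$ at $\xi_0$ with $\mu''(\xi_0)=2a_1^2$, the harmonic-oscillator expansion gives $E_n(L_\theta)=-a_0+a_1(2n-1)\sin\theta+\mathcal O(\sin^{3/2}\!\theta)$: the upper bound uses as quasimodes the rescaled Hermite functions of $-\sin^2\!\theta\,\partial_\xi^2+a_1^2(\xi-\xi_0)^2$ multiplied by $\phi_\xi$, and the lower bound uses an IMS localisation together with Agmon-type decay estimates confining the eigenfunctions to a $\sin^{1/2}\!\theta$-neighbourhood of $\xi_0$, plus the fibrewise bound $h(\xi)\ge\mu(\xi)$. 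Because the quasimodes have spatial width $\sin^{1/2}\!\theta$, the cubic Taylor remainder of $\mu$ at $\xi_0$ and the off-diagonal quasimode terms are of size $\sin^{3/2}\!\theta$, which is what the stated error reflects; as $\sin\theta=\theta+\mathcal O(\theta^3)$ this gives the theorem. I expect the main difficulty to be the lower bound — controlling the non-adiabatic remainder of the Born--Oppenheimer reduction and excluding spurious low-lying eigenvalues — for which the Agmon confinement near $\xi_0$ and the uniform fibre gap there are the essential ingredients.
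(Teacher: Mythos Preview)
Your proposal is correct and follows the same Born--Oppenheimer strategy as the paper, but with a somewhat different realisation of the reduction that is worth highlighting. The paper keeps the full three-dimensional problem: it rescales via a unitary $V_\theta$ to a fixed cone $C_{\pi/4}$ with semiclassical parameter $h=\tan\theta$ in the $x_3$-direction, and its transverse fibre at height $x_3$ is the \emph{two-dimensional} Schr\"odinger operator $B_{x_3,\sqrt2}$ with a $\delta$-ring of strength $\sqrt2$. You instead first pass to the meridian half-plane using the rotational invariance from \cite{LOB}, then slice along the cone's generatrix, so your transverse fibre is the \emph{one-dimensional} operator $h(\xi)=-\partial_x^2-\tfrac1{4x^2}-\delta(x-\xi)$ on $(0,\infty)$ with the Friedrichs realisation. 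These two fibres are of course related: your $h(\xi)$ is exactly the $m=0$ radial part of the paper's $B_{R,\beta}$ after the $\sqrt r$ conjugation, which is why the Bessel calculus (the equation for $A$, the identification of $a_0$ and $a_1$) is identical. Your slicing has the advantage of a cleaner fibre gap --- since $-\partial_x^2-\tfrac1{4x^2}\ge0$ by the critical Hardy inequality and the $\delta$ is rank one, $h(\xi)$ has exactly one negative eigenvalue, whereas the paper must separately establish $\mu_2(R;\sqrt2)>-\tfrac12$ for the 2D fibre (its Proposition~\ref{prop:2Dmodel}(d)). On the other hand the paper's horizontal slicing makes the fibre domain $\theta$-independent, while your fibre potential carries a residual $\cos\theta$ that you must check contributes only at order $\theta^2$. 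For the lower bound the paper uses a direct projection argument (project onto the span of the fibre ground states and estimate the commutator $[\partial_{x_3},P]$ via $\|\partial_R\Phi_R\|$) rather than your Agmon/IMS localisation; both are standard routes to the same $\cO(\theta^{3/2})$ remainder.
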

Furthermore, the following asymptotics for the eigenvalue accumulation holds:
\begin{thm}\label{thm:mainth2}
For any $\gamma\in \big(0,\frac{3}{2}\big)$ and $C>0$ there holds
\[
\cN\Big(L_\theta,-\dfrac{1}{4}-C\theta^\gamma\Big) \sim \dfrac{\gamma}{4\pi} \dfrac{|\log \theta|}{\theta}
\text{ for } \theta\to 0^+.
\]
\end{thm}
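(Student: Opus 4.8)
The plan is to reduce the three-dimensional problem to a family of one-dimensional operators by separating variables, exploiting that eigenfunctions are rotationally invariant (as recalled from \cite{LOB}). In the spherical-type coordinates adapted to the cone, the operator $L_\theta$ decomposes over the angular momentum of the fiber, and since only the ground-state angular sector contributes eigenvalues below $-\tfrac14$, one obtains a one-parameter family of one-dimensional Schr\"odinger operators on a half-line, whose single $\delta$-interaction sits at a point whose location depends on $\theta$; alternatively, after a Mellin-type transform in the radial variable, the spectral problem below $-\tfrac14$ becomes that of an operator $H_\theta$ on the line $\RR$ (the logarithmic radial variable) with a potential that is a rescaled profile. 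The key observation driving the asymptotics is that, in this picture, the effective Schr\"odinger operator has a potential well whose depth scales like $\theta^{-1}$ and whose width scales like $\theta$ after an appropriate rescaling, so that, writing $E=-\tfrac14-\mu$ and tracking $\mu$, one is led to a semiclassical-type problem with small parameter $h\sim\theta$.

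From there I would prove a two-sided bound on the counting function by the Dirichlet--Neumann bracketing already implicit in Theorem~\ref{thm:mainth}: since each eigenvalue $E_n(L_\theta)$ behaves like $-a_0 + a_1(2n-1)\theta + \mathcal{O}(\theta^{3/2})$, the condition $E_n(L_\theta) < -\tfrac14 - C\theta^\gamma$ translates, after rearranging, into an inequality on $n$. Concretely, $-a_0 + a_1(2n-1)\theta + \mathcal O(\theta^{3/2}) < -\tfrac14 - C\theta^\gamma$ would give roughly $2n-1 < \dfrac{(a_0-\tfrac14) - C\theta^\gamma + \mathcal O(\theta^{3/2})}{a_1\theta}$, so the count grows like $\dfrac{a_0-\tfrac14}{2a_1\theta}$ to leading order --- but this is a count of order $\theta^{-1}$, not $\theta^{-1}\log\theta$, which signals that Theorem~\ref{thm:mainth} alone governs only the bulk, not the accumulation near the threshold, and that the extra logarithm comes from the infinitely many eigenvalues clustering at $-\tfrac14$. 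So the correct strategy is to combine the ``bulk'' description (uniformly valid for $n$ up to order $\theta^{-1}$) with a separate analysis of eigenvalues very close to $-\tfrac14$, where the one-dimensional effective operator is, after removing the near-threshold part, governed by a Coulomb-like or inverse-square-type tail producing the Efimov/oscillatory accumulation responsible for the $|\log\theta|$ factor.

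More precisely, I would establish that, for energies $E = -\tfrac14 - t$ with $t$ in a window $C\theta^\gamma \le t \le \varepsilon$, the counting function $\cN(L_\theta,E)$ admits the asymptotic $\cN(L_\theta,-\tfrac14-t) = \dfrac{\cot\theta}{4\pi}\,|\log t| + R_\theta(t)$ with a remainder $R_\theta(t)$ that is controlled uniformly, refining the $\theta$-fixed result of \cite{LOB} into a statement uniform in $\theta$. Plugging $t = C\theta^\gamma$ gives $|\log t| = \gamma|\log\theta| + |\log C| \sim \gamma|\log\theta|$, and $\cot\theta \sim \theta^{-1}$, whence the main term $\dfrac{\gamma}{4\pi}\dfrac{|\log\theta|}{\theta}$; one then checks the remainder and the contribution of eigenvalues below the window $t>\varepsilon$ are both $o\!\left(\theta^{-1}\log\theta\right)$ --- the latter being of order $\theta^{-1}$ by the bulk estimate, hence negligible against $\theta^{-1}\log\theta$.

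The main obstacle, as I see it, is obtaining the accumulation asymptotics \emph{uniformly in $\theta$} down to scales $t\sim\theta^\gamma$: the result of \cite{LOB} is an asymptotic as $E\to 0^+$ (i.e. $t\to 0^+$) for each \emph{fixed} $\theta$, with an $o(\log E)$ error whose dependence on $\theta$ is not a priori controlled, and the condition $\gamma<\tfrac32$ strongly suggests that the analysis must reach down to where $t\sim\theta^{3/2}$ is the borderline --- precisely the scale at which the $\mathcal O(\theta^{3/2})$ remainder in Theorem~\ref{thm:mainth} becomes comparable to the spacing. I expect that handling this requires a careful analysis of the one-dimensional transfer matrix / Pr\"ufer angle for the effective radial operator with a $\theta$-dependent $\delta$-interaction, showing that the phase responsible for eigenvalue counting is $\frac{\cot\theta}{4\pi}|\log t|$ plus an error that is $o(|\log t|/\theta)$ uniformly for $t\ge C\theta^\gamma$; the constraint $\gamma<\tfrac32$ presumably enters exactly when verifying that this error estimate does not break down. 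The rest --- extracting leading terms, using $\cot\theta = \theta^{-1} + \mathcal O(\theta)$ and $|\log(C\theta^\gamma)| = \gamma|\log\theta| + \mathcal O(1)$ --- is routine bookkeeping.
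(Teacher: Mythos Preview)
Your proposal has the right instincts in places --- the role of the inverse-square tail, the semiclassical parameter $h\sim\theta$, and the fact that $\gamma<\tfrac32$ should come from a $\theta^{3/2}$-sized error in some bracketing --- but the central strategy has a genuine gap. You propose to take the fixed-$\theta$ result $\cN(L_\theta,-\tfrac14-t)=\tfrac{\cot\theta}{4\pi}|\log t|+R_\theta(t)$ from \cite{LOB} and upgrade it to a statement with $R_\theta(t)=o(\theta^{-1}|\log\theta|)$ uniformly for $t\ge C\theta^\gamma$. But this uniform statement \emph{is} essentially the theorem; you have restated what must be proved rather than reduced it to something tractable. The suggestion to control the Pr\"ufer phase of ``the effective radial operator with a $\theta$-dependent $\delta$-interaction'' is too vague to count as a mechanism: you would need to identify that operator precisely, justify the reduction with two-sided eigenvalue bounds, and then carry out the phase analysis with explicit $\theta$-uniform error terms. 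None of this is sketched.

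The paper's route is different and avoids the uniformity problem entirely. A dilation $V_\theta$ maps $L_\theta$ to $\tfrac{1+\tan^2\theta}{2}\,Q_h$ with $h=\tan\theta$, where $Q_h$ acts on a \emph{fixed} domain (the cone $C_{\pi/4}$) and $h$ enters only as a coefficient of $|\partial_{x_3}u|^2$. A Born--Oppenheimer separation in $x_3$ then brackets $Q_h$ between one-dimensional operators $-h^2\,d^2/dR^2+\mu_1(R)$ on $(\rho,\infty)$ (Neumann/Dirichlet at $\rho$), up to errors $O(h^{3/2})$, where $\mu_1(R)$ is the ground-state energy of the two-dimensional $\delta$-circle operator of radius $R$ and strength $\sqrt{2}$. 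The key fact, proved via Bessel-function identities, is $\mu_1(R)+\tfrac12=-\tfrac{1}{4R^2}+O(R^{-3})$. The counting problem is thereby reduced to a purely one-dimensional semiclassical question: for $T_h=-h^2D^2-\tfrac{1}{4x^2}+v$ with $v=o(x^{-2})$, show $\cN(T_h,-Ch^\gamma)\sim\tfrac{\gamma}{4\pi}\,h^{-1}|\log h|$ for $\gamma\in(0,2)$. This is done directly by Sturm oscillation (the zeros of $\sqrt{x}\sin(\tfrac{\sqrt{1-h^2}}{2h}\log x)$) plus bracketing in the style of \cite{KS88}. The restriction $\gamma<\tfrac32$ then arises exactly because the Born--Oppenheimer bracketing carries an $h^{3/2}$ error, which must be absorbed into the threshold $-Ch^\gamma$.

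In short: the paper never tries to make the \cite{LOB} asymptotics uniform. It replaces the $\theta$-dependent geometry by a fixed geometry with a semiclassical parameter, identifies a concrete one-dimensional model operator with an explicit $-1/(4x^2)$ tail, and proves the counting asymptotics for that model from scratch. Your outline lacks the analogous concrete reduction, and without it the ``uniformity'' step is the whole difficulty, not a technical lemma.
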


\begin{rem}
One can also show, by a technically involved but standard Agmon-type approach,
see e.g. \cite{bhbook}, that the eigenfunctions are localized near the point $(0,0,\xi_0/\theta)$:
in the simplest version, if $u_{n,\theta}$ is an $L_2$-normalized eigenfunction of $L_\theta$ for the eigenvalue $E_n(\theta)$
with a fixed $n$, then for small $\theta$ there holds, with suitable $a,b,c>0$,
\[
\iiint_{\RR^3} \big|u_{n,\theta}(x)\big|^2 \exp\bigg( a|x_1|+a|x_2| + \dfrac{b}{\theta}\Big|x_3-\dfrac{\xi_0}{\theta}\Big|\bigg)  dx <c.
\]
\end{rem}

Our proofs are based on a rather straightforward application of the Born-Oppen\-heimer strategy, see e.g. \cite{ray} for an extensive discussion.
In Section \ref{sec:auxope} we recall some constructions related to the min-max principle
and provide a detailed study of several one- and two-dimensional operators. A part of the study is based
on involved operations with modified Bessel functions. The information obtained is then used
in Section~\ref{sec:proofth} to prove Theorems~\ref{thm:mainth} and~\ref{thm:mainth2}.
The proof scheme is quite close to the one used in \cite{DR}
for the study of the two-dimensional counterpart of the problem, i.e. for the $\delta$-interaction
supported on the boundary of a sharp infinite sector, but with essential differences due
to the properties of the associated models operators.
\section{Auxiliary constructions}\label{sec:auxope}

\subsection{Min-max principle}

Let us recall some constructions related to the min-max principle for self-adjoint operators,
see e.g. \cite[Chapter XIII.1]{RS78}.

Let $\cH$ be an infinite-dimensional Hilbert space and $E\in\RR$. If $T$ is a self-adjoint operator in $\cH$, then we denote
by $\cD(T)$ its domain and by $E_n(T)$ we denote the $n$-th eigenvalue of $T$ when enumerated in the non-decreasing
order and counted according to the multiplicities. The symbol $\cN(T,E)$ will stand for the dimension of the range of the spectral projector
of $T$ on $(-\infty,E)$. If $T$ is lower semibounded and $E<\inf\sigma_\mathrm{ess}(T)$,
then $\cN(T,E)$ is exactly the number of eigenvalues of $T$ (counting the multiplicities)
in $(-\infty,E)$, otherwise one has $\cN(T,E)=+\infty$. Remark that
$\cN(T_1\oplus T_2,E)=\cN(T_1,E)+\cN(T_2,E)$ for any two self-adjoint operators
$T_1$ and $T_2$ and any $E\in\RR$.
The function $E\mapsto \cN(T,E)$ is usually called the eigenvalue counting function for $T$.

If the operator $T$ in $\cH$ is generated by a closed lower semibounded
sesquilinear form $t$ defined on the domain $\cD(t)$, then the following variational characterization
of the eigenvalues holds (min-max principle): for $n\in\NN$ set
\[
\Lambda_n(T):=\inf_{\substack{V\subset \cD(t)\\ \dim V=n}}\sup_{\substack{u\in V\\u\ne 0}} \dfrac{t(u,u)}{\|u\|^2_{\cH}},
\]
then $E_n(T)=\Lambda_n(T)$ if $\Lambda_n(T)<\inf\sigma_\mathrm{ess}(T)$, otherwise
one has $\Lambda_m(T)=\inf\sigma_\mathrm{ess}(T)$ for all $m\ge n$.
We denote $E_n(t):=E_n(T)$, $\Lambda_n(t):=\Lambda_n(T)$,  $\cN(t,E):=\cN(T,E)$.

For two sesquilinear forms $t_1$ and $t_2$, their direct sum
$t_1\oplus t_2$ is the sesquilinear form defined on $\cD(t_1\oplus t_2):=\cD(t_1) \times \cD(t_2)$
by
\[
(t_1\oplus t_2)\big((u_1,u_2),(u_1,u_2)\big):=t_1(u_1,u_1)+t_2(u_2,u_2).
\]
If $T_1$ and $T_2$ are the operators associated with $t_1$ and $t_2$,
then the operator associated with $t_1\oplus t_2$ is $T_1\oplus T_2$.
The form inequality $t_1\ge t_2$ means that $\cD(t_1)\subseteq \cD(t_2)$
and $t_1(u)\ge t_2(u)$ for all $u\in \cD(t_1)$. By the min-max principle,
the form inequality implies the respective inequality for the eigenvalues,
$E_n(t_1)\ge E_n(t_2)$ for any ${n\in\NN}$, and the reverse inequality
for the eigenvalue counting functions,
$\cN(t_1,E)\le \cN(t_2,E)$ for all $E\in\RR$.

\subsection{One dimensional semi-classical operator}
Let us recall a classical result on the harmonic approximation of one-dimensional operators,
see \cite{bhbook}.

\begin{prop}\label{prop:harmapprox}
Let $(a,b)\subset\RR$ be a finite or infinite non-empty open interval
and $U:(a,b)\to \mathbb R$ be a bounded $C^\infty$ function having
a unique minimum at $\xi\in(a,b)$, which is non-degenerate, i.e.
	\[
		U(x) > U(\xi) \text{ for all } x\in(a,b)\setminus\{\xi\},
		\quad \quad U''(\xi) > 0,
	\]
	and such that
	\[
	\liminf_{x\to a^+} U(x)> U(\xi), \quad
	\liminf_{x\to b^-} U(x)> U(\xi).
	\]
Let $T_h$ be the self-adjoint operator in $L^2(a,b)$ given by
\[
T_h=-h^2\dfrac{d^2}{dx^2}+U
\]
with any self-adjoint $h$-independent boundary conditions at $a$ and $b$, then
for any fixed $n\in\NN$ there holds
	\[
		E_n(T_h) = U(\xi) + (2n-1)\sqrt{\frac{U''(\xi)}{2}}\, h + \mathcal{O}(h^{\frac{3}{2}}) \text{ as } h\to 0^+.
	\]
\end{prop}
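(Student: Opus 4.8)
The plan is to establish matching upper and lower bounds for $E_n(T_h)$ by the classical harmonic-approximation scheme (see \cite{bhbook}). Write $\mathcal{A}:=-\frac{d^2}{dy^2}+\frac{U''(\xi)}{2}\,y^2$ for the model harmonic oscillator in $L^2(\RR)$: its eigenvalues $\mu_n:=(2n-1)\sqrt{U''(\xi)/2}$ are simple, and the associated $L^2$-normalized eigenfunctions $\psi_n$ (rescaled Hermite functions) have parity $(-1)^{n-1}$, so that $\langle y^3\psi_j,\psi_j\rangle=0$ for every $j$. The rescaling $x=\xi+\sqrt h\,y$ is unitary from $L^2(a,b)$ onto $L^2(I_h)$, $I_h:=\big((a-\xi)/\sqrt h,(b-\xi)/\sqrt h\big)$, and it transforms $T_h-U(\xi)$ into $h\,\widetilde T_h$ with $\widetilde T_h=-\frac{d^2}{dy^2}+W_h$, $W_h(y):=h^{-1}\big(U(\xi+\sqrt h\,y)-U(\xi)\big)$; Taylor's formula gives $W_h(y)=\frac12 U''(\xi)y^2+\frac{\sqrt h}{6}U'''(\xi)y^3+\mathcal{O}(h\,y^4)$ uniformly for bounded $\sqrt h\,|y|$, while the hypotheses on $U$ force $W_h$ to be bounded below by a positive constant times $h^{-1}$ near $\partial I_h$, so that $\inf\sigma_\mathrm{ess}(\widetilde T_h)$ exceeds any fixed number for $h$ small. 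It therefore suffices to prove $E_n(\widetilde T_h)=\mu_n+\mathcal{O}(\sqrt h)$, which rescales to the assertion.

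For the upper bound I would apply the min-max principle to the $n$-dimensional space spanned by the quasimodes $\psi^\sharp_j(y):=\chi(\sqrt h\,y/\delta_0)\,\psi_j(y)$, $j=1,\dots,n$, where $\chi\in C^\infty_c(-1,1)$ equals $1$ near $0$ and $\delta_0>0$ is fixed with $(\xi-\delta_0,\xi+\delta_0)\subset(a,b)$; being compactly supported in $I_h$, each $\psi^\sharp_j$ lies in the form domain of $\widetilde T_h$ whatever the boundary conditions. Since the $\psi_j$ decay like Gaussians, passing from $\psi_j$ to $\psi^\sharp_j$ changes all relevant scalar products and form values by only $\mathcal{O}(e^{-c/h})$, and inserting the Taylor expansion of $W_h$ yields $\langle\widetilde T_h\psi^\sharp_j,\psi^\sharp_k\rangle=\mu_j\delta_{jk}+\frac{\sqrt h}{6}U'''(\xi)\langle y^3\psi_j,\psi_k\rangle+\mathcal{O}(h)$, whose diagonal is $\mu_j+\mathcal{O}(h)$ because the $\sqrt h$-term vanishes by parity. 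Thus the Gram matrix is $\mathrm{Id}+\mathcal{O}(e^{-c/h})$ and the form matrix is $\mathrm{diag}(\mu_1,\dots,\mu_n)+\mathcal{O}(\sqrt h)$, so the supremum of the Rayleigh quotient over this space is $\mu_n+\mathcal{O}(\sqrt h)$; combined with the location of $\sigma_\mathrm{ess}(\widetilde T_h)$, this gives $E_n(\widetilde T_h)\le\mu_n+\mathcal{O}(\sqrt h)$.

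For the lower bound I would use an IMS-type localization: choosing real $\chi_-,\chi_+\in C^\infty$ with $\chi_-^2+\chi_+^2\equiv 1$, $\operatorname{supp}\chi_-\subset\{|x-\xi|<\delta_0\}$, $\chi_-\equiv1$ on $\{|x-\xi|\le\delta_0/2\}$, one has $\langle T_hu,u\rangle\ge\langle T_h\chi_-u,\chi_-u\rangle+\langle T_h\chi_+u,\chi_+u\rangle-Ch^2\|u\|^2$ on the form domain. On $\operatorname{supp}\chi_+$ the hypotheses give $U\ge U(\xi)+c_0$ for some $c_0>0$, so the $\chi_+$-term only carries energies $\ge U(\xi)+c_0$, which is irrelevant for small $h$ (the boundary conditions at $a,b$ play no role here, by Dirichlet--Neumann bracketing). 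The $\chi_-$-term equals the Dirichlet form of $T_h$ on $(\xi-\delta_0,\xi+\delta_0)$ evaluated at $\chi_-u$, and after rescaling this reduces to $-\frac{d^2}{dy^2}+W_h$ on the large interval $(-\delta_0/\sqrt h,\delta_0/\sqrt h)$, which we split as $\mathcal{A}_h+\frac{\sqrt h}{6}U'''(\xi)y^3+R_h$, where $\mathcal{A}_h$ is the Dirichlet realization of $\mathcal{A}$ on that interval (so $E_j(\mathcal{A}_h)=\mu_j+\mathcal{O}(e^{-c/h})$) and $R_h=\mathcal{O}(h\,y^4)$ is a remainder of relative size $\mathcal{O}(h)$. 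A Feshbach--Schur (Grushin) reduction onto the $n$-dimensional spectral subspace of $\mathcal{A}_h$ below $\tfrac12(\mu_n+\mu_{n+1})$ is then available, because $\frac{\sqrt h}{6}U'''(\xi)y^3+R_h$ has relative bound with respect to $\mathcal{A}_h$ small for $\delta_0$ small, so the complementary block stays bounded below by a fixed positive constant; it produces an effective $n\times n$ matrix $\mathrm{diag}(\mu_1,\dots,\mu_n)+\frac{\sqrt h}{6}U'''(\xi)M+\mathcal{O}(h)$ with $M_{jk}=\langle y^3\psi_j,\psi_k\rangle$ of zero diagonal. As the $\mu_j$ are distinct, the eigenvalues of this matrix are $\mu_j+\mathcal{O}(h)$, whence $E_n(\widetilde T_h)\ge\mu_n-\mathcal{O}(h)$; together with the upper bound, $E_n(T_h)=U(\xi)+\mu_n h+\mathcal{O}(h^{3/2})$ (indeed with error $\mathcal{O}(h^2)$).

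The main obstacle is the sharpness of the lower bound: a plain IMS localization combined with the quadratic minorant $U(x)\ge U(\xi)+\tfrac12 U''(\xi)(x-\xi)^2-C|x-\xi|^3$ yields, after optimizing the localization width, only a remainder $\mathcal{O}(h^{4/3})$; to reach $\mathcal{O}(h^{3/2})$ one must keep the cubic Taylor term and exploit the vanishing of its diagonal matrix elements (parity), which is exactly what the Feshbach--Schur reduction encodes. The remaining ingredients --- the analysis away from $\xi$, the irrelevance of the boundary conditions, and the exponentially small corrections for the harmonic oscillator on a large interval --- are routine, and complete details may be found in \cite{bhbook}.
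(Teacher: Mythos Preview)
The paper does not prove this proposition at all: it is stated as a classical result and the reader is simply referred to \cite{bhbook}. Your sketch is therefore not to be compared against any argument in the paper, but it is worth noting that what you outline is precisely the standard harmonic-approximation scheme that the cited reference develops, so in that sense your approach agrees with the intended one.

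Your argument is essentially correct. Two small remarks. First, the phrase ``by Dirichlet--Neumann bracketing'' for the $\chi_+$-contribution is not quite the right justification: for a general $h$-independent self-adjoint boundary condition the quadratic form may contain a boundary term, but this term carries an explicit factor $h^2$ (since the boundary condition is imposed on $-d^2/dx^2$ and then multiplied by $h^2$), and the kinetic part with that boundary term is still bounded below by $h^2\lambda_1$ for a fixed $\lambda_1\in\RR$; hence $q_h(\chi_+u)\ge (U(\xi)+c_0+h^2\lambda_1)\|\chi_+u\|^2$, which is what you need. Second, in the Feshbach--Schur step the remark that $R_h$ ``has relative size $\mathcal O(h)$'' should be read as referring to its matrix elements against the Gaussian $\psi_j$ (where indeed $\langle R_h\psi_j,\psi_k\rangle=\mathcal O(h)$), not to a relative form bound with respect to $\mathcal A_h$ (which is only $\mathcal O(\delta_0^2)$ on the large interval); the former is what the reduction actually uses, together with the relative form bound $|V_h|\le C\delta_0\,y^2$ that keeps the complementary block bounded below. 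With these clarifications your sketch yields the claimed $\mathcal O(h^{3/2})$ remainder (and, as you note, in fact $\mathcal O(h^2)$).
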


\subsection{Two dimensional $\delta$-interaction on a circle of varying radius}\label{circ2}

The operator $B_{R,\beta}$ that will be of importance in what follows is the two-dimensional Schr\"{o}dinger with a $\delta$-interaction of strength $(-\beta)$ supported by a circle of radius $R>0$. It is defined via the associated sesquilinear form
	\[
		b_{R,\beta}(u,u) =\iint_{\mathbb{R}^2}|\nabla u|^2 dx_1\,dx_2 - \beta \int_{|x|=R} |u|^2 d\sigma,\quad u\in H^1(\mathbb{R}^2),
	\]
with $d\sigma$ being the arclength element. One may show that the operator $B_{R,\beta}$ is the Laplacian acting
of the functions $u$ satisfying the transmission conditions
\begin{equation}
   \label{trans}
u|_{r=R^-}=u|_{r=R^+}=:u|_{r=R}, \quad
\dfrac{\partial u}{\partial r}\Big|_{r=R^+}-\dfrac{\partial u}{\partial r}\Big|_{r=R^-}=-\beta u|_{r=R},
\end{equation}
with $r:=|x|$, see e.g. \cite{ET04}. It is easy to see that the essential spectrum of $B_{R,\beta}$ is $[0,+\infty)$.
We will need some information about the dependence of the first eigenvalues on the radius $R$.
In what follows we set
\[
\mu_n(R;\beta):=\Lambda_n(B_{R,\beta}).
\]
Recall that the constants $a_0$, $a_1$ and $\xi_0$ are defined in \eqref{eq-xi0}.

\begin{prop}\label{prop:2Dmodel} 

\textup{(a)} For any $R>0$ and $\beta>0$ one has $\mu_1(R,\beta)<0$, i.e. it is the first eigenvalue,
		which is simple, and the map $(0,+\infty)\ni R \mapsto \mu_1(R;\beta)\in\mathbb{R}$ is $C^\infty$.
		Furthermore, if $\Phi_{R,\beta}$ is the associated eigenfunction chosen normalized and non-negative,
		then the map $(0,+\infty)\ni R\mapsto \Phi_{R,\beta}\in L^2(\mathbb R^2)$ is $C^1$.

\smallskip

\noindent 
\textup{(b)} The function $(0,+\infty)\ni R\mapsto\mu_1(R;\beta)$ has a unique minimum at
		\[R=\xi:=\dfrac{\sqrt 2}{\beta} \xi_0
		\]
		with $	\mu_1(\xi;\beta)=-\beta^2 a_0$ and $\mu''_1(\xi;\beta)=2\beta^4 a_1^2$,
    and
		\[
				\lim_{R\rightarrow 0^+} \mu_1(R;\beta) = 0, \quad \mu_1(R;\beta) = -\frac{1}{4}\,\beta^2 -\frac1{4R^2} +\mathcal{O}\Big( \frac1{R^3}\Big) \text{ as } R\to +\infty.
		\]

\smallskip

\noindent
\textup{(c)} For any $a>0$ there exists $M_a>0$ such that $||\partial_R \Phi_{R,\beta}||_{L^2(\RR^2)}\le M_a$  for all $R\in(a,+\infty)$.

\smallskip

\noindent 
\textup{(d)} For any $R>0$ and $\beta>0$ there holds $\mu_2(R;\beta)> -\frac{1}{4}\beta^2$.

\end{prop}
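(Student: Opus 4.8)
The plan is to diagonalise $B_{R,\beta}$ by separation of variables in polar coordinates $(r,\varphi)$ and, using the dilation identity $\mu_n(R;\beta)=\beta^2\mu_n(\beta R;1)$ (a consequence of the scaling invariance of the Dirichlet integral in dimension two), to reduce all four assertions to the study of explicit transcendental equations for the modified Bessel functions. Writing $u=\sum_{m\in\ZZ}u_m(r)e^{\rmi m\varphi}/\sqrt{2\pi}$ gives $B_{R,\beta}\cong\bigoplus_{m\in\ZZ}B^{(m)}_{R,\beta}$, where $B^{(m)}_{R,\beta}$ acts in $L^2((0,\infty),r\,\dd r)$ as $-\partial_r^2-\frac1r\partial_r+\frac{m^2}{r^2}$ with the transmission condition $[\partial_r u]|_{r=R}=-\beta\,u|_{r=R}$. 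A number $-k^2<0$ is an eigenvalue of $B^{(m)}_{R,\beta}$ precisely when the function equal to $I_m(kr)$ on $(0,R)$ and to a constant multiple of $K_m(kr)$ on $(R,\infty)$ satisfies the transmission condition; matching values and using the Wronskian $I_m(z)K_m'(z)-I_m'(z)K_m(z)=-1/z$ this collapses to $I_m(kR)K_m(kR)=1/(\beta R)$. Since $z\mapsto I_m(z)K_m(z)$ is positive and strictly decreasing on $(0,\infty)$, with limit $+\infty$ at $0^+$ for $m=0$, limit $1/(2|m|)$ at $0^+$ for $m\ne0$, and limit $0$ at $+\infty$, the $m=0$ equation always has a unique solution $k=k(R,\beta)$, while for $m\ne0$ a solution exists only for $\beta R>2|m|$; and $\nu\mapsto I_\nu(z)K_\nu(z)$ being decreasing in the order, whenever an $m\ne0$ channel carries a negative eigenvalue it lies strictly above the $m=0$ one, while a rank-one (Birman--Schwinger) argument shows each channel carries at most one negative eigenvalue. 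This yields (a): as $\inf\sigma_\mathrm{ess}(B_{R,\beta})=0$, the value $\mu_1(R,\beta)=-k(R,\beta)^2<0$ is the bottom of the spectrum, hence an eigenvalue; it is simple (its eigenspace is the one-dimensional space from the $m=0$ channel) with eigenfunction $\Phi_{R,\beta}$ proportional to $I_0(kr)$ on $(0,R)$ and to $K_0(kr)$ on $(R,\infty)$, hence positive; $R\mapsto k(R,\beta)$ is $C^\infty$ (in fact real-analytic) by the implicit function theorem applied to $F(k,R)=I_0(kR)K_0(kR)-1/(\beta R)$, whose $k$-derivative is nonzero by the strict monotonicity of $I_0K_0$; and $R\mapsto\Phi_{R,\beta}$ is $C^1$ in $L^2(\RR^2)$ because the piecewise formula depends smoothly on $R$ and, $\phi_R$ being continuous across $r=R$, its $R$-derivative carries no distributional contribution (alternatively one invokes analytic perturbation theory for the isolated simple eigenvalue).

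For (b) I take $\beta=1$ and set $g(t):=t\,I_0(t)K_0(t)$. The $m=0$ branch is parametrised by $t=k\rho$ through $\rho=\rho(t)=1/(I_0(t)K_0(t))=t/g(t)$, a strictly increasing bijection of $(0,\infty)$ onto itself, with $\mu_1(\rho;1)=-k^2=-g(t)^2$; hence the critical points of $\rho\mapsto\mu_1(\rho;1)$ are exactly those of $g$, and $g'(t)=I_0(t)K_0(t)+t(I_1(t)K_0(t)-I_0(t)K_1(t))$ vanishes precisely at the solutions of \eqref{eq-A}. As $g(0^+)=0$, $g(+\infty)=\frac12$ and $g(A)^2=a_0>\frac14$, the function $g$ has an interior maximum exceeding $\frac12$; the delicate point is that \eqref{eq-A} has a unique root, i.e. $g$ has a single critical point, which follows from a monotonicity analysis of the ratios $I_1/I_0$ and $K_1/K_0$ (one writes $g'=I_0K_0\,h$ with $h'(t)=t(2-(I_1/I_0)^2-(K_1/K_0)^2)$). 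Granting this, $\rho\mapsto\mu_1(\rho;1)$ has a unique minimum at $\rho_*=1/(I_0(A)K_0(A))$, rescaling to $\xi=\sqrt2\,\xi_0/\beta$, with $\mu_1(\xi;\beta)=-\beta^2g(A)^2=-\beta^2a_0$. Differentiating twice through the parametrisation, using $g'(A)=0$ and $\rho'(A)=1/g(A)$, gives $\mu_1''(\xi;\beta)=-2\beta^4g(A)^3g''(A)$; computing $g''(A)$ from the recurrences and Wronskians for $I_0,I_1,K_0,K_1$ — at $t=A$ one has $I_1(A)K_0(A)=(1-p)/(2A)$, $I_0(A)K_1(A)=(1+p)/(2A)$ with $p:=I_0(A)K_0(A)$ — produces the identity $-g(A)^3g''(A)=a_1^2$, which is exactly what the (otherwise mysterious) definition of $a_1$ encodes, whence $\mu_1''(\xi;\beta)=2\beta^4a_1^2$. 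Finally $\rho\to0^+$ corresponds to $t\to0^+$, where $g(t)\to0$, so $\mu_1(\rho;1)\to0$; while $\rho\to+\infty$ corresponds to $t\to+\infty$, where the classical expansion $I_0(t)K_0(t)=\frac1{2t}(1+\frac1{8t^2}+\mathcal O(t^{-4}))$ gives $\rho=2t+\mathcal O(t^{-1})$ and $\mu_1(\rho;1)=-g(t)^2=-\frac14-\frac1{4\rho^2}+\mathcal O(\rho^{-3})$, which rescales to the stated expansion of $\mu_1(R;\beta)$.

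Parts (c) and (d) likewise hinge on fine information about Bessel functions. For (d), with $\beta=1$: if $\rho\le2$ then only the $m=0$ channel carries a negative eigenvalue, and just one, so $\mu_2(\rho;1)\ge0>-\frac14$; if $\rho>2$ then $\mu_2(\rho;1)=\mu_3(\rho;1)$ is the eigenvalue $-t_1^2/\rho^2$ of the $m=\pm1$ channel, with $I_1(t_1)K_1(t_1)=1/\rho$, and the bound $-t_1^2/\rho^2>-\frac14$ is equivalent, by monotonicity of $I_1K_1$, to $(\rho/2)\,I_1(\rho/2)K_1(\rho/2)<\frac12$, i.e. to the classical inequality $z\,I_1(z)K_1(z)<\frac12$ for $z>0$ (verified by showing $z\mapsto z\,I_1(z)K_1(z)$ increases to the limit $\frac12$). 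For (c), continuity on compact subsets of $(0,\infty)$ disposes of bounded $R$, so the issue is uniformity as $R\to+\infty$; with $\Phi_{R,\beta}=\phi_R/\|\phi_R\|$, $\phi_R$ the explicit unnormalised profile, one has $\|\partial_R\Phi_{R,\beta}\|\le\|\partial_R\phi_R\|/\|\phi_R\|$, and one estimates this ratio using the $t\to+\infty$ Bessel asymptotics together with $k(R,\beta)\to\beta/2$ and $|k'(R,\beta)|=\mathcal O(R^{-3})$ (both from (b)): the contributions of the radial-derivative terms vanish as $R\to+\infty$, while the ``interface'' contribution stays $\mathcal O(1)$, which gives the desired bound $M_a$.

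The main obstacles are exactly the ``involved operations with modified Bessel functions'' flagged in the introduction: the uniqueness of the root $A$ of \eqref{eq-A} (equivalently, that $g$ has a single critical point), the identity $-g(A)^3g''(A)=a_1^2$, the inequality $z\,I_1(z)K_1(z)<\frac12$, and the uniform-in-$R$ estimate in part (c). Each is elementary but requires patient bookkeeping with the recurrence relations, Wronskians and large-argument expansions of $I_0,I_1,K_0,K_1$.
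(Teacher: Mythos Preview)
Your approach is essentially the same as the paper's: separation of variables in polar coordinates, the spectral condition $I_m(kR)K_m(kR)=1/(\beta R)$, the parametrisation through $t=kR$ and the function $g(t)=tI_0(t)K_0(t)$ (the paper calls it $F$), and the explicit computation of $\mu_1''(\xi)$ via Bessel recurrences and the Wronskian. Your inequality $\|\partial_R\Phi_{R,\beta}\|\le\|\partial_R\phi_R\|/\|\phi_R\|$ in (c) is in fact slightly cleaner than the paper's triangle-inequality estimate, since $\partial_R\Phi_{R,\beta}$ is the orthogonal projection of $\partial_R\phi_R/\|\phi_R\|$ off $\Phi_{R,\beta}$.

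The one substantive difference is how the ``obstacles'' you flag at the end are handled. The paper does not prove the uniqueness of the critical point of $g$ or the inequality $zI_1(z)K_1(z)<\tfrac12$ from scratch; it cites Hartman, \emph{SIAM J.\ Math.\ Anal.}\ \textbf{8} (1977) 558--571, Theorems~4.1 and~4.2, which give precisely these facts (and more: the sign pattern of $g''$ as well). Your sketched argument for uniqueness via $h'(t)=t\big(2-(I_1/I_0)^2-(K_1/K_0)^2\big)$ is correct as a formula, but turning it into a proof that $h$ has a single zero still requires showing that $2-(I_1/I_0)^2-(K_1/K_0)^2$ changes sign exactly once, which is not immediate; Hartman's results short-circuit this. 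Similarly, the paper appeals to Baricz and Phillips--Malin for the monotonicity of $H_0$ and $H_1$ rather than arguing directly.
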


\begin{proof}
During the proof we will omit the dependence on the parameter $\beta$.

Recall first that the operator can be studied using a separation of variables, and we reproduce briefly  a part of computations from~\cite[\S 2.1]{ET04}.
By introducing the polar coordinates $(r,\theta)$ centered at the origin one may look for eigenfunctions of the form $\phi(r,\theta)=\rho_m(r)e^{in\theta}$, $m\in \ZZ$.
Outside the ring $r=R$ the function $\phi$ must satisfy the free Schr\"odinger equation $-\Delta \phi=-k^2 \phi$, $k>0$, hence, one should look for the functions $\rho_n$
having the form
\[
\rho_m(r)=\begin{cases}
c_1 I_m(k r), & r<R,\\
c_2 K_m(kr), & r>R,
\end{cases}
\]
with $I_m$ and $K_m$ being the modified Bessel functions of the respective orders and $c_1,c_2\in\mathbb{R}$.
Taking into account the transmission conditions \eqref{trans} and the Wronskian identities
\begin{equation}
 \label{wron}
K'_{m}(x)I_m(x)-I'_m(x)K_m(x)=\dfrac{1}{x},
\end{equation}
see  \cite[Eqs. 10.28.2, 10.29.3]{Ol10}, 
one arrives at the spectral condition
\[
H_m (kR)=\dfrac{1}{\beta R}, \quad H_m=I_m K_m.
\]
Recall that one has the identities $I_{-m}=I_m$ and $K_{-m}=K_m$, hence, $H_{-m}=H_m$, and the inequalities
$H_{m-1}(x)>H_{m}(x)$ for all $m\in\NN$ and $x>0$,
see e.g. \cite[Theorem 2]{b2}.

Now let us address each point separately.

(a) The first eigenvalue $E_1(B_{R,\beta})$ is represented as $E_1(B_{R,\beta})=-k_1(R)^2$ with $k_1(R)>0$ being the solution of
\begin{equation}
    \label{impl1}
H_0(k R)=\dfrac{1}{\beta R}.
\end{equation}
By \cite[Sections 10.30 and 10.40]{Ol10} we have
\begin{equation}\label{eqn:asymptotic_H}
H_0(t)\sim -\log t \text{ for }  t\to 0,\quad
H_0(t)=\dfrac{1}{2t}\left(  1+\dfrac{1}{8t^2}+\cO\Big(\dfrac{1}{t^3}\Big) \right) \text{ for } t\to +\infty,
\end{equation}
implying $H_0(t)\to +\infty$ for $t\to 0$ and $H_0(t)\to 0$ for $t\to+\infty$. Furthermore,
by \cite[Theorem 1]{Bar09} the function $H$ is strictly completely monotonic on $(0,+\infty)$, that is for any $t\in (0,\infty)$ and $n\in \mathbb N$
one has  $(-1)^n H^{(n)}_0(t)>0$. In particular, it follows that $H_0:(0,\infty)\mapsto (0,\infty)$ is a diffeomorphism and a strictly decreasing function. Moreover, thanks to \eqref{eqn:asymptotic_H}, when $t\to 0$ we get
\begin{equation}\label{eqn:asymptotic_H^-1}
H_0^{-1}(t)=\frac{1}{2t}\left(1+\frac{t^2}{2}+\mathcal{O}(t^4)\right).
\end{equation}
Thus $k_1$ is uniquely defined by
\begin{equation}\label{eqn:def_k_1}
k_1(R)=\frac{1}{R}H^{-1}_0\left(\frac{1}{\beta R}\right)
\end{equation}
and is infinitely smooth. The respective positive normalized  eigenfunction $\Phi_{R}$ is given then by
\begin{equation}
    \label{eq-phi}
\Phi_R(x)=\alpha_R\begin{cases}
I_0\big(k_1 |x|\big)K_0(k_1R), & |x|<R,\\
I_0(k_1 R)K_0\big(k_1 |x|\big), & |x|>R,
\end{cases}
\end{equation}
where $\alpha_R>0$ is a normalization constant, and a simple computation shows that it can be differentiated in $R$.
	
(b)
The analysis is based on the implicit equation \eqref{impl1}.
Consider the function 
\begin{equation}
  \label{eqgr}
g(R):=R\ k_1(R)=H^{-1}_0\left(\frac{1}{\beta R}\right),
\end{equation}
which is strictly increasing and infinitely smooth on $(0,+\infty)$ with
	\begin{equation}\label{eqn:limitg}
		\lim_{t\rightarrow0} g(t) = 0,\quad \lim_{t\rightarrow +\infty}g(t) = +\infty.
	\end{equation}
By multiplying both sides of \eqref{impl1} by $\beta g(R)$ one arrives at
\begin{equation}
  \label{ekf}
k_1=\beta \  F\circ g, \quad F(t):= t\  H_0(t).
\end{equation}
Combining \eqref{eqn:asymptotic_H},  \eqref{eqn:limitg} and \eqref{eqn:asymptotic_H^-1} we get the expected limit for $R\to 0$
and for $R\to+\infty$
\begin{equation}\label{eqn:asymptotic_k_1}
k_1(R)=\frac{1}{2}\beta
\left(
1+\frac{1}{2\beta R^2}+\mathcal{O}\left(\frac{1}{R^4}\right)\right).
\end{equation}
which gives the asymptotic behaviour of $\mu_1(R)$ for $R\to+\infty$.

Remark that the functions $t\mapsto t H_m(t)$ were studied earlier by various authors, see e.g. \cite{bez,Har77}. In particular,
by \cite[Theorem 4.2]{Har77} there exist $t_0$ and $t_1$ with $\frac{1}{2}<t_0<t_1$ such that
\begin{gather*}
	F'(t) < 0 \ \text{for} \ t\in(0,t_0), \quad F'(t) > 0  \ \text{for} \  t\in(t_0,+\infty),\\
		F''(t) < 0 \ \text{for} \ t\in(0,t_1), \quad F''(t) > 0 \ \text{for} \ t\in(t_1,+\infty),
\end{gather*}
and it follows that $t_0$ is  the unique maximum of $F$ on $(0,+\infty)$ and is the unique solution of $F'(t)=0$, and that $F''(t_0)<0$. 
Due to identities
\begin{equation}
  \label{eqik}
I_0'=I_1, \quad K_0'=-K_1,
\end{equation}
see  \cite[Eq. 10.29.3]{Ol10}, one has $F'(t)=I_0(t)K_0(t)+t\big(I_1(t)K_0(t)-I_0(t)K_1(t)\big)$,
which implies that $t_0$ coincides with the constant $A$ in \eqref{eq-A}.

Using the expression
\begin{equation}
  \label{mu1}
\mu_1 = -k_1^2\equiv -\beta^2(F\circ g)^2
\end{equation}
one concludes that the function $(0,+\infty)\ni R\mapsto \mu_1(R)$ has a unique minimum at $R=\xi$,
where $\xi$ is chosen by the condition $g(\xi)\equiv \xi k_1(\xi)=A$. Using the identity \eqref{impl1}
for $R=\xi$ one has then
\[
H_0(A)=H_0\big(\xi k_1(\xi)\big)=\dfrac{1}{\beta \xi}, \quad \text{i.e. } \xi=\dfrac{1}{\beta H_0(A)}\equiv \dfrac{1}{\beta I_0(A)K_0(A)}= \dfrac{\sqrt 2}{\beta} \xi_0,
\]
while
\[
\min_{R>0} \mu_1(R)=-\big(\max_{R>0} k_1(R) \big)^2=-\beta^2\big(\max_{t>0} F(t) \big)^2
=-\beta^2 F(A)^2=-\beta^2 a_0.
\]
Furthermore, with the help of \eqref{mu1} one has
\begin{align*}
\mu'_1(R)&=-2\beta^2 F\big(g(R)\big) F'\big(g(R)\big) g'(R),\\
\mu''_1(R)&=-2\beta^2 \Big( F'\big(g(R)\big) g'(R) \Big)^2\\
&\qquad -2\beta^2 F\big(g(R)\big) \bigg( F''\big(g(R)\big) g'(R)^2+ F'\big(g(R)\big) g''(R)\bigg),
\end{align*}
and using $g(\xi)=A$ and $F'(A)=0$ we arrive at
$\mu''_1(\xi)= -2\beta^2 F(A) F''(A) g'(\xi)^2$.
It follows from \eqref{eqgr} and \eqref{ekf} that
\[
g'(R)=k_1(R)+Rk'_1(R),\quad
g'(\xi)=k_1(\xi)+\xi\cdot 0=\beta F(g(\xi))=\beta F(A),
\]
resulting in
\begin{equation}
        \label{mu12}
\mu''_1(\xi)= -2\beta^4 F(A)^3 F''(A).
\end{equation}
We have $F''(t)=\big(t H_0(t)\big)''=\big(H_0(t)+t H'_0(t)\big)'=2H'_0(t)+t H''_0(t)$.
Furthermore, using the definition of $A$ we have $H'_0(A)=-H_0(A)/A$. With the
help of the identities \eqref{eqik} and
\[
I_1'(t)=I_0(t)-\dfrac{I_1(t)}{t}, \quad -K'_1(t)=K_0(t)+\dfrac{K_1(t)}{t},
\]
see \cite[Eq. 10.29.2]{Ol10}, we obtain
\begin{multline*}
H''_0(A)=(I_0 K_0)''(A)=(I_1 K_0-I_0 K_1)'(A)= (I_1'K_0 -I_0 K'_1 -2I_1 K_1)(A)\\
=2I_0(A)K_0(A)-2I_1(A)K_1(A)-\dfrac{I_1(A)K_0(A)-I_0(A) K_1(A)}{A}.
\end{multline*}
Using the definition of $A$ one arrives at
\begin{multline*}
H''_0(A)= 2I_0(A)K_0(A)-2I_1(A)K_1(A)+\dfrac{I_0(A)K_0(A)}{A^2}\\
=I_0(A)K_0(A) \Big(2+\dfrac{1}{A^2}-2\dfrac{I_1(A)K_1(A)}{I_0(A)K_0(A)}\Big).
\end{multline*}
To simplify further we rewrite the condition \eqref{eq-A} for $A$ as
\[
\dfrac{K_1(A)}{K_0(A)}-\dfrac{I_1(A)}{I_0(A)}=\dfrac{1}{A},
\]
then by using the Wronskian identity \eqref{wron} combined with \eqref{eqik} one arrives at
\begin{align*}
\dfrac{I_1(A)K_1(A)}{I_0(A)K_0(A)}&=\dfrac{1}{4} \Bigg(
\bigg(\dfrac{K_1(A)}{K_0(A)}+\dfrac{I_1(A)}{I_0(A)}\bigg)^2 - \bigg(\dfrac{K_1(A)}{K_0(A)}-\dfrac{I_1(A)}{I_0(A)}\bigg)^2
\Bigg)\\
&=\dfrac{1}{4} \Bigg(
\bigg(\dfrac{K_1(A) I_0(A)+I_1(A)K_0(A)}{I_0(A)K_0(A)}\bigg)^2 - \dfrac{1}{A^2}\Bigg)\\
&=\dfrac{1}{4} \Bigg(
\bigg(\dfrac{1}{AI_0(A)K_0(A)}\bigg)^2 - \dfrac{1}{A^2}\Bigg)
=\dfrac{1}{4} \Big(\dfrac{1}{a_0} - \dfrac{1}{A^2}\Big),
\end{align*}
which then gives
\[
H''_0(A)=I_0(A)K_0(A)\Big(2+\dfrac{3}{2A^2}-\dfrac{1}{2a_0}\Big).
\]
and
\begin{multline*}
F''(A)=2H'_0(A)+A H''_0(A)=-\dfrac{2H_0(A)}{A}+A H''_0(A)\\
=-\dfrac{2 I_0(A)K_0(A)}{A}+A I_0(A)K_0(A)\Big(2+\dfrac{3}{2A^2}-\dfrac{1}{2a_0}\Big)\\
=A I_0(A)K_0(A)\Big(2-\dfrac{1}{2A^2}-\dfrac{1}{2a_0}\Big).
\end{multline*}
Hence, by \eqref{mu12},
\[
\mu''_1(\xi)= 2\beta^4 A^4 I_0(A)^4 K_0(A)^4\Big(\dfrac{1}{2A^2}+\dfrac{1}{2a_0}-2\Big)
=2\beta^4 a_1^2.
\]

(c) Thanks to (a) it is sufficient to show that $\limsup_{R\to+\infty}||\partial_R \Phi_{R}||_{L^2}<\infty$.

We start by an auxiliary estimate: for any $n,m\in \mathbb{N}$ and $\alpha>0$ there holds, for $t\to+\infty$,
	\begin{equation}\label{eqn:asymptotic_I_nK_m}
		K_n(t)^2\displaystyle\int_0^t I_m(s)^2 s^\alpha \ ds\sim\frac{t^{\alpha-2}}{8},
		\quad
		I_m(t)^2\displaystyle\int_t^{+\infty} K_n(s)^2 s^\alpha \ ds\sim\frac{t^{\alpha-2}}{8}.
	\end{equation}
In fact, from \cite[Eqs. 10.40.1 and 10.40.2]{Ol10} we know that when $t\rightarrow+\infty$
	\begin{gather}\label{eqn:asymptotic_bessel}
		\begin{aligned}
			I_m(t)&=
			\frac{e^t}{(2\pi t)^{\frac{1}{2}}}
			\left(
			1
			-\frac{a_1(m)}{t}
			+\frac{a_2(m)}{t^2}
			-\frac{a_3(m)}{t^3}	
			+\frac{a_4(m)}{t^4}		
			+\cO\Big(\frac{1}{t^5}\Big)\right),
			\\
			K_m(t)&= e^{-t}\left(\frac{\pi}{2 t}\right)^{\frac{1}{2}}
			\left(
			1
			+\frac{a_1(m)}{t}
			+\frac{a_2(m)}{t^2}
			+\frac{a_3(m)}{t^3}		
			+\frac{a_4(m)}{t^4}		
			+\cO\Big(\frac{1}{t^5}\Big)\right),\\
		\end{aligned}
		\end{gather}		
Where for all $k\in \mathbb{N}$ we set 
\[
a_k(m):=\frac{\prod_{j=1}^k \left(4m^2-(2j-1)^2\right)}{(k!)8^k}	.
\]
Moreover, from \cite[Eq. 10.29.3]{Ol10}, we have 
\begin{equation}\label{eqn:derivative_K_n}
		I'_m(t)=I_{m+1}(t)+\frac{m}{t}I_{m}(t), \quad K'_n(t)=-K_{n+1}(t)+\frac{n}{t}K_{n}(t).
\end{equation}			
By combining L'H\^opital's rule with \eqref{eqn:asymptotic_bessel} and \eqref{eqn:derivative_K_n} we arrive at
	\[
		K_n(t)^2\int_0^t I_m(s)^2 s^\alpha  \ ds \sim
			\dfrac{\displaystyle\int_0^t I_m(s)^2 s^\alpha  \ ds}{\dfrac{\mathstrut1}{K_n(t)^2}}\sim
			\dfrac{\Big(K_n(t) I_m(t)\Big)^2 t^\alpha}{-2\dfrac{ K'_n(t)}{K_n(t)}}\sim \frac{t^{\alpha-2}}{8}.
	\]
The second estimate in~\eqref{eqn:asymptotic_I_nK_m} is proved in an analogous way.

We change slightly the representation \eqref{eq-phi} for $\Phi_R$. Namely, denote
	\begin{equation}
	  \label{eq-fr}
		f_R(r):=
			\begin{cases}
				\sqrt{R}\ K_0\big(k_1 R\big)I_0\big(k_1 r\big)&\text{for}\ r\leq R,\\
				\sqrt{R}\ I_0\big(k_1 R\big)K_0\big(k_1r\big)&\text{for}\ r\geq R,
			\end{cases}
	\end{equation}
with $k_1(R)$ defined in \eqref{eqn:def_k_1} and
\[
c_R:= \int_0^\infty f_R(r)^2 r\ dr\equiv  \dfrac{1}{2\pi}\iint_{\RR^2} f_R\big(|x|\big)^2\, dx=\dfrac{1}{2\pi}\big\|f_R \big(|x|\big)\big\|^2_{L^2(\RR^2)},
\]
then
\[
\Phi_{R}(x)=\dfrac{1}{\sqrt{\mathstrut 2\pi c_R}}\, f_R\big(|x|\big).
\]
Assume that there exist $b_1,b_2\in(0,+\infty)$ such that for large $R$ there holds
	\begin{align}\label{eqn:estimate_constant}
			b_1<c_R<b_2,\\
			\label{eqn:estimate_derivative_constant}
			||\partial_R f_R(|x|)||_{L^2(\mathbb{R}^2)}<b_2,
	\end{align}
then using the triangle inequality one can estimate
	\begin{align*}
		\big\|\partial_R \Phi_R\big\|_{L^2(\RR^2)}
			&\leq \dfrac{1}{\sqrt{\mathstrut 2\pi c_R}} \Big\|\partial_R f_R \big(|x|\big)\Big\|_{L^2(\RR^2)} 
					+\dfrac{|\partial_R c_R|}{2\ \sqrt{\mathstrut 2\pi c_R}}\big\|f_R \big(|x|\big)\big\|_{L^2(\RR^2)}\\
			&=
			    \dfrac{1}{\sqrt{\mathstrut 2\pi c_R}}  \Big\|\partial_R f_R \big(|x|\big)\Big\|_{L^2(\RR^2)} + \dfrac{1}{2}\,|\partial_R c_R|.
	\end{align*}
	Furthermore, 
	\begin{multline*}
	 |\partial_R c_R|= \bigg| \partial_R \Big\langle f_R \big(|x|\big), f_R \big(|x|\big)\Big\rangle_{L^2(\RR^2)}\bigg|
	= 2\bigg|\Big\langle \partial_R f_R \big(|x|\big), f_R \big(|x|\big)\Big\rangle_{L^2(\RR^2)}\bigg|\\
	\le 2\Big\|\partial_R f_R \big(|x|\big)\Big\|_{L^2(\RR^2)}\cdot\Big\|f_R \big(|x|\big)\Big\|_{L^2(\RR^2)}\\
	= 2\sqrt{2\pi c_R} \Big\|\partial_R f_R \big(|x|\big)\Big\|_{L^2(\RR^2)}\le 2b_2 \sqrt{2\pi b_2},
	\end{multline*}
	implying 
	\[
	\big\|\partial_R \Phi_{R}\big\|_{L^2(\RR^2)}\le \dfrac{b_2}{\sqrt{2\pi b_1}} + b_2\sqrt{2\pi b_2}.
	\]
Therefore, it is sufficient to prove the inequalities \eqref{eqn:estimate_constant} and  \eqref{eqn:estimate_derivative_constant}.

Thanks to \eqref{eqn:asymptotic_I_nK_m}, the function
	\[
		G(t):=K_0(t)^2\int_0^t I_0(s)^2s\ ds+I_0(t)^2\int_t^\infty K_0(s)^2s\ ds
	\]
 satisfies
\begin{equation}\label{eqn:asymptotic_G}
G(t)\sim\frac{1}{4t} \text{ for } t\to +\infty.
\end{equation}
Using the change of variable $s=k_1(R) r$ we arrive at
	\begin{equation}\label{eqn:integration_c_R}
			c_R=\dfrac{R}{k_1(R)^2}\, G\big(k_1(R)R\big).
	\end{equation}
Using $\lim_{R\rightarrow+\infty}k_1(R)=\frac{1}{2}\,\beta$ and combining \eqref{eqn:integration_c_R} with
\eqref{eqn:asymptotic_G} we conclude that $\lim_{R\rightarrow+\infty}c_R = 2\beta^{-3}$, which proves \eqref{eqn:estimate_constant}.

We now prove the remaining inequality \eqref{eqn:estimate_derivative_constant}. Let us study first the asymptotic behavior
of $k_1'(R)$ for $R\to +\infty$. Using \eqref{eqn:def_k_1} one has
\[
k_1'(R)=-\frac{k_1(R)}{R}-\dfrac{1}{\beta R^3 H'_0\big(k_1(R)R\big)}, \quad
H_0(t):=I_0(t)K_0(t).
\]
Since $H'_0(t)=I_1(t)K_0(t) - I_0(t)K_1(t)$, thanks to \eqref{eqn:asymptotic_bessel} we can affirm that 
\[
H'_0(t)=-\frac{1}{2t^2}-\frac{3}{16t^4} + \cO\left(\frac{1}{t^6}\right) \quad t\to +\infty,
\]
which, combined with \eqref{eqn:asymptotic_k_1}, implies that
\begin{equation}\label{eqn:asymptotic_k_1'}
	k'_1(R)R=\cO\left(\dfrac{1}{R^2}\right), \quad R\to +\infty.
\end{equation}
The direct derivation of \eqref{eq-fr} with the help of \eqref{eqik} gives
\begin{equation}
     \label{eq-drf}
\partial_R f_R\big(|x|\big)=\dfrac{1}{2R} f_R \big(|x|\big) + \sqrt{R} k_1' F_R\big(|x|\big) + \sqrt{R}\big(k'_1 R+k_1\big) G_R\big(|x|\big)
\end{equation}
with
\[
F_R(r)=\begin{cases}
r I_1(k_1 r)K_0(k_1R), & r<R,\\
-r I_0(k_1 R)K_1(k_1 r), & r>R,
\end{cases}
\quad
G_R(r)=\begin{cases}
-I_0(k_1 r)K_1(k_1R), & r<R,\\
 I_1(k_1 R)K_0(k_1 r), & r>R.
\end{cases}
\]
Set, for $t>0$,
\begin{align*}
	\widetilde F(t)&:=
		K_0(t)^2\int_0^t I_1(s)^2 s^3 \ ds+I_0(t)^2\int_t^{+\infty} K_1(s)^2 s^3 \ ds,\\
	\widetilde G(t)&:=
		K_0(t)^2\int_0^t I_1(s)^2 s \ ds+I_0(t)^2\int_t^{+\infty} K_1(s)^2 s \ ds,
\end{align*}
then
\[
\Big\|F_R\big(|x|\big)\Big\|^2_{L^2(\RR^2)}=\dfrac{2\pi}{k_1^4}\,\widetilde F(k_1 R),
\quad
\Big\|G_R\big(|x|\big)\Big\|^2_{L^2(\RR^2)}=\dfrac{2\pi}{k_1^2}\,\widetilde G(k_1 R).
\]
Thanks to \eqref{eqn:asymptotic_I_nK_m} we get $\widetilde{F}(t)\sim\frac{1}{4}\, t$
and $\widetilde G(t) \sim\frac{1}{4}\,t^{-1}$ for $t\to+\infty$, hence,
\[
\sqrt{R}\Big\|F_R\big(|x|\big)\Big\|_{L^2(\RR^2)}= \cO(R),
\quad
\sqrt{R}\Big\|G_R\big(|x|\big)\Big\|_{L^2(\RR^2)}=\cO(1),
\quad R\to +\infty.
\]
Using the triangle inequality in \eqref{eq-drf} one has
\begin{align*}
\Big\|\partial_R f_R\big(|x|\big)\Big\|_{L^2(\RR^2)}&
\le \dfrac{\sqrt{2\pi c_R}}{2R} +| k_1'| \sqrt{R}  \Big\|F_R\big(|x|\big)\Big\|_{L^2(\RR^2)}\\
&\quad + |k'_1 R+k_1| \sqrt{R} \Big\|G_R\big(|x|\big)\Big\|_{L^2(\RR^2)}\\
&=\cO(R^{-1})+\cO(R^{-3})\cO(R) + \cO(1)\cO(1)=\cO(1).
\end{align*}

	(d) As follows from the separation of variables given at the beginning of the proof, the second eigenvalue of $B_{R}$ is written as $E_2(B_R)=-k_2(R)^2$
	with $k_2(R)>0$ determined by the condition
		\begin{equation}
		   \label{h1k2}
		 H_1(k_2 R)=\dfrac{1}{\beta R}.
		\end{equation}
		By \cite[Corollary 2.2]{PM50} the function $H_1$ is strictly decreasing, and by \cite[Sections 10.30 and 10.40]{Ol10}
one has $\lim_{t\rightarrow0} H_1(t) = \frac12$ and $\lim_{t\to+\infty}H_1(t)=0$,
i.e. $H_1:(0,+\infty) \rightarrow (0,\frac 12)$ is invertible. It follows that the solution $k_2$ (and, hence, the second eigenvalue of $B_R$)
exists if and only if $\beta R>2$. Furthermore, by multiplying both sides of \eqref{h1k2} by $\beta R k_2(R)$
one arrives as $k_2(R)=\beta F\big(Rk_2(R)\big)$ with $F(t)=t H_1(t)$, and by \cite[Theorem 4.1]{Har77} one has $0<F(t)<\frac{1}{2}$ for all $t>0$.
\end{proof}

\subsection{Weyl-type asymptotics with a moving threshold}

We are going to prove the following result inspired by the constructions of~\cite{KS88}:

\begin{prop}\label{prop:weyl}
Let $a>0$  and $v\in L^\infty(a,+\infty)$ be real-valued with
\begin{equation}
   \label{eq-vdelta}
\big|v(x)\big|=o\bigg(\dfrac{1}{x^2}\bigg) \text{ as } x\to+\infty
\end{equation}
Denote by $T_h$ the operator in  $L^2(a,+\infty)$
acting by
\[
(T_hu)(x)= -h^2u''(x)+ \Big(-\dfrac{1}{4x^2}+v(x)\Big) u(x)
\]
with any self-adjoint boundary condition at $a$,
then for any $\gamma\in(0,2)$ and $C>0$ there holds
	\[
		\cN \Big(T_h,-Ch^\gamma\Big)
		\sim \dfrac{\gamma}{4\pi}\dfrac{|\log h|}{h} 
		\text{ for } h\to 0^+.
	\]
\end{prop}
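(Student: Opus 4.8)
The plan is to reduce the problem to an essentially exactly solvable one by treating the perturbation $v$ as negligible and comparing with the operator having potential $-\tfrac{1}{4x^2}$ alone on a half-line. The key point is that the threshold of the essential spectrum is $0$, the eigenvalues we count lie in $(-Ch^\gamma,0)$, and the operator $-h^2\tfrac{d^2}{dx^2}-\tfrac{1}{4x^2}$ on $L^2(a,+\infty)$ is of Hardy type: its negative spectrum accumulates at $0$ logarithmically, and one can compute the counting function $\cN$ almost explicitly after the substitution $x=e^s$ (or $x=a e^{t}$), which turns it into a Schr\"odinger operator on a half-line with a constant potential $-\tfrac14$ plus exponentially decaying corrections. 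Concretely, after conjugation by the unitary $L^2(a,+\infty)\to L^2(\log a,+\infty)$, $u(x)\mapsto x^{1/2}u(x)$, the operator $-h^2u''-\tfrac{u}{4x^2}$ becomes $-h^2\tfrac{d^2}{ds^2}$ (with the Hardy term exactly absorbed), and the negative-energy counting function of $-h^2\tfrac{d^2}{ds^2}$ with an $h$-independent boundary condition at $s=\log a$ is governed by Weyl's law on an effectively finite interval whose length diverges as the energy tends to $0$; pairing the threshold $-Ch^\gamma$ with this picture produces the factor $\tfrac{|\log h|}{h}$.

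The steps, in order, would be: (i) Conjugate $T_h$ by $u\mapsto x^{1/2}u$ and rewrite $\cN(T_h,-Ch^\gamma)$ as the counting function of $-h^2\tfrac{d^2}{ds^2}+x(s)^2 v(x(s))\cdot x(s)^{-2}\cdot(\dots)$, noting that the hypothesis $|v(x)|=o(x^{-2})$ means the transformed potential is $o(1)$ at infinity and in fact $L^\infty$, tending to $0$. (ii) Use the bracketing technique from the min-max subsection: for any $\varepsilon>0$ and large $x_0$, sandwich $T_h$ between two operators obtained by (Dirichlet or Neumann) decoupling at $x_0$ and by replacing $v$ with $\pm\varepsilon/x^2$ on $(x_0,\infty)$; on $(a,x_0)$ the contribution to $\cN$ is $\cO(h^{-1})=o(h^{-1}|\log h|)$, so only the tail matters. (iii) On the tail $(x_0,\infty)$, the comparison operators are exactly $-h^2\tfrac{d^2}{dx^2}+(-\tfrac14\pm\varepsilon)\tfrac{1}{x^2}$; after the logarithmic substitution these are free Laplacians shifted by the constant $-\tfrac14\pm\varepsilon$ on a half-line, whose negative counting function at level $-Ch^\gamma$ is computed explicitly — it equals $\tfrac{1}{\pi h}\int_{x_0}^{X(h)}\sqrt{(\tfrac14\mp\varepsilon)x^{-2}-Ch^{\gamma}}\,\dfrac{dx}{\cdot}$ type integral, i.e. after the change of variables a length of order $\sqrt{\tfrac14\mp\varepsilon}\cdot\tfrac12|\log(Ch^\gamma)|$ divided by $h$, giving $\sim \tfrac{\sqrt{1/4\mp\varepsilon}}{\pi h}\cdot\tfrac{\gamma}{2}|\log h|$. (iv) Let $\varepsilon\to0$: the prefactor $\sqrt{1/4}=\tfrac12$ yields $\tfrac{\gamma}{4\pi}\tfrac{|\log h|}{h}$, and the lower and upper bounds coincide in the limit, establishing the asymptotic equivalence.

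The main obstacle I expect is making the explicit computation in step (iii) genuinely clean: the operator $-h^2\tfrac{d^2}{dx^2}+c\,x^{-2}$ with $c=-\tfrac14+\delta$ on a half-line with a boundary condition at $x_0$ is not literally constant-coefficient, so one must either invoke the known spectral theory of the Bessel/Hardy operator (its negative eigenvalues are of the form $-\lambda_k$ with $\log\lambda_k$ in arithmetic progression when $c<-\tfrac14$, and there are none when $c\ge -\tfrac14$ — which is why the perturbation's sign matters) or, more robustly, do a second bracketing cutting $(x_0,\infty)$ into dyadic blocks $(2^j,2^{j+1})$ on each of which $x^{-2}$ is comparable to a constant, count eigenvalues block-by-block via the elementary one-dimensional Weyl count $\cN\approx \tfrac{1}{\pi h}\cdot(\text{length})\cdot\sqrt{(\text{potential well depth})}$, and sum the resulting geometric-type series in $j$. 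Keeping track of the boundary-condition-independence (any self-adjoint condition at $a$ changes $\cN$ by at most $1$) and checking that the errors from the dyadic bracketing are $o(h^{-1}|\log h|)$ uniformly is the technically delicate part, but it is standard once the scheme is set up, and the constructions recalled in Section~\ref{sec:auxope} together with \cite{KS88} provide exactly the needed machinery.
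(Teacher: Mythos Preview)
Your outer architecture --- reduce to Dirichlet at $a$, split at some large $b$, bound the finite piece by $\cO(h^{-1})$, absorb $v$ into $\pm\varepsilon/x^2$ on the tail, and let $\varepsilon\to 0$ --- matches the paper's proof almost verbatim. The gap is entirely in step~(iii), the count for the pure inverse-square comparison operator. Your claim that the conjugation $u\mapsto x^{1/2}u$ with $s=\log x$ turns $-h^2\tfrac{d^2}{dx^2}+\tfrac{c}{x^2}$ into $-h^2\tfrac{d^2}{ds^2}+\text{const}$ is false: the unitary map leaves a weight $e^{-2s}$ in front of the form (equivalently, the spectral parameter picks up a factor $e^{2s}$), so you do not land on a constant-coefficient operator. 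What \emph{is} true is that the zero-energy solutions of $-h^2u''+\tfrac{c}{x^2}u=0$ are $x^{1/2\pm i\nu}$ with $\nu=\tfrac{1}{2h}\sqrt{-4c-h^2}$, i.e.\ sinusoids in $\log x$; this can be exploited via Sturm oscillation, but only after one has reduced to a count at level zero on a finite interval. Your dyadic fallback also fails as stated: on blocks of fixed ratio $2$ the potential varies by a factor $4$, so the Dirichlet and Neumann per-block counts differ by a factor $2$, and the resulting upper and lower leading constants do not coincide; extracting the sharp $\gamma/(4\pi)$ would require a further limit (block ratio $\to 1$) that you do not mention.

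The paper's route through this step exploits the scale-invariance of $x^{-2}$ more directly. Instead of working with coefficient $-\tfrac14\pm\varepsilon$, it rescales $h$: since $-h^2D^2-\tfrac{1}{4(1-\varepsilon)x^2}=\tfrac{1}{1-\varepsilon}\big(-h_+^2D^2-\tfrac{1}{4x^2}\big)$ with $h_+=h\sqrt{1-\varepsilon}$, only the count for coefficient exactly $-\tfrac14$ is ever needed (Lemma~\ref{lem:equivlem2}). For that, a dilation sends the threshold $-Ch^\gamma$ to $-1$ and the potential to $-\tfrac{1}{4h^2x^2}$; then a Kirsch--Simon splitting (Lemma~\ref{lem:comptvpeps}, with the $h$-dependent choice $\varepsilon=1/|\log h|$) against an indicator cutoff at the turning point $B=\tfrac{1}{2h}$ reduces everything to the zero-energy count on a finite interval, where Sturm oscillation for the explicit solution $\sqrt{x}\sin\big(\tfrac{\sqrt{1-h^2}}{2h}\log x\big)$ gives the answer (Lemma~\ref{lem:equivpertur}). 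The two ingredients missing from your sketch are thus the $h$-rescaling that normalizes the coefficient to $-\tfrac14$, and the reduction to zero energy on a finite interval that makes the logarithmic substitution genuinely yield a constant-coefficient equation.
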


During the proof we adopt the following notation: for a non-empty open interval $\Omega\subset\RR$ (bounded or unbounded),
a potential $V:\Omega\to \RR$ and parameters $h>0$ and $E\in\RR$ we denote by
\[
\cN\Big(h^2D^2+V, \Omega, E\Big)
\]
the number, counting the multiplicities, of the eigenvalues in $(-\infty,E)$ of the self-adjoint operator
$u\mapsto -h^2u''+Vu$ in $L^2(\Omega)$ with Dirichlet boundary conditions. By $1_\Omega$ we denote 
the indicator function of $\Omega$.

We need a couple of preliminary assertions.

\begin{lem}\label{lem:comptvpeps} Let $\Omega\subset\RR$ be a non-empty open interval 
and $V_1,V_2 \in L^\infty(\Omega)\cap L^1(\Omega)$ be real-valued, then for any $h>0$, $ E>0$ and $\varepsilon\in(0,1)$
one has
\begin{multline*}
\cN\big(h^2D^2+V_1+V_2,\Omega,-E\big)\\
\le \cN\Big(h^2D^2+\dfrac{1}{1-\varepsilon}\,V_1,\Omega,-E\big)
+
\cN\Big(h^2D^2 +\dfrac{1}{\varepsilon}\,V_2,\Omega,-E\Big)
\end{multline*}
and
\begin{multline*}
\cN\big(h^2D^2+V_1+V_2,\Omega,-E\big)\\
\ge 
\cN\Big(h^2D^2+ (1-\varepsilon)V_1, \Omega, -E\Big)
-
\cN\Big(h^2D^2-\dfrac{1-\varepsilon}{\varepsilon}\, V_2,\Omega,-E\Big).
\end{multline*}
\end{lem}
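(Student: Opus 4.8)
The plan is to extract both inequalities from a single ``splitting'' estimate for the counting function, carried out with spectral projectors. Since $V_1,V_2\in L^\infty(\Omega)$, the quadratic forms $u\mapsto\int_\Omega V_j|u|^2$ are bounded on $L^2(\Omega)$, hence every operator $h^2D^2+W$ occurring in the statement (with $W$ a real bounded potential and a Dirichlet condition at the endpoints of $\Omega$) is self-adjoint and lower semibounded, with form domain $H^1_0(\Omega)$. I will use two elementary facts. First, because $h^2D^2+W$ is bounded below, the spectral subspace $\ran E_{(-\infty,-E)}(h^2D^2+W)$ is contained in $H^1_0(\Omega)$, and for every nonzero $u$ in it one has the \emph{strict} inequality $h^2\|u'\|^2_{L^2(\Omega)}+\int_\Omega W|u|^2<-E\,\|u\|^2_{L^2(\Omega)}$, the spectral measure of $u$ being carried by the open half-line $(-\infty,-E)$. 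Second, if $u\in H^1_0(\Omega)$ is annihilated by $E_{(-\infty,-E)}(h^2D^2+W)$, then $h^2\|u'\|^2_{L^2(\Omega)}+\int_\Omega W|u|^2\ge-E\,\|u\|^2_{L^2(\Omega)}$.

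For the first (upper) inequality, put $P:=E_{(-\infty,-E)}(h^2D^2+V_1+V_2)$ and let $P_1$, $P_2$ be the corresponding projectors for $h^2D^2+\tfrac1{1-\varepsilon}V_1$ and $h^2D^2+\tfrac1\varepsilon V_2$. For a nonzero $u\in\ran P$, splitting $h^2\|u'\|^2=(1-\varepsilon)h^2\|u'\|^2+\varepsilon h^2\|u'\|^2$ and $E\|u\|^2=(1-\varepsilon)E\|u\|^2+\varepsilon E\|u\|^2$ in the first fact rewrites it as
\begin{multline*}
\Big[(1-\varepsilon)h^2\|u'\|^2+\int_\Omega V_1|u|^2+(1-\varepsilon)E\|u\|^2\Big]\\
+\Big[\varepsilon h^2\|u'\|^2+\int_\Omega V_2|u|^2+\varepsilon E\|u\|^2\Big]<0,
\end{multline*}
so at least one of the two brackets is negative; dividing it by $1-\varepsilon$, respectively by $\varepsilon$, this exactly says that the quadratic form of $h^2D^2+\tfrac1{1-\varepsilon}V_1$, respectively of $h^2D^2+\tfrac1\varepsilon V_2$, lies strictly below $-E\|u\|^2$ at $u$. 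By the second fact this is incompatible with $P_1u=0$, respectively with $P_2u=0$; hence the linear map $\ran P\ni u\mapsto(P_1u,P_2u)\in\ran P_1\times\ran P_2$ sends every nonzero vector to a nonzero one, so it is injective and $\dim\ran P\le\dim\ran P_1+\dim\ran P_2$, which is the first inequality.

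The second (lower) inequality follows by applying the first one with $V_1$, $V_2$ replaced by $W_1:=(1-\varepsilon)(V_1+V_2)$ and $W_2:=-(1-\varepsilon)V_2$ (still in $L^\infty(\Omega)\cap L^1(\Omega)$): since $W_1+W_2=(1-\varepsilon)V_1$, $\tfrac1{1-\varepsilon}W_1=V_1+V_2$ and $\tfrac1\varepsilon W_2=-\tfrac{1-\varepsilon}\varepsilon V_2$, the first inequality becomes
\begin{multline*}
\cN\big(h^2D^2+(1-\varepsilon)V_1,\Omega,-E\big)\\
\le\cN\big(h^2D^2+V_1+V_2,\Omega,-E\big)+\cN\Big(h^2D^2-\tfrac{1-\varepsilon}\varepsilon V_2,\Omega,-E\Big),
\end{multline*}
and rearranging gives the desired bound. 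The argument is essentially routine; there is no real obstacle, the only points needing care being the careful handling of strict versus non-strict inequalities in the splitting step and the (standard) verification that the spectral subspaces $\ran P$, $\ran P_1$, $\ran P_2$ all lie in the common form domain $H^1_0(\Omega)$, which is what makes it legitimate to evaluate the quadratic forms on them via the spectral theorem.
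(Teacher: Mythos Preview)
Your proof is correct and complete. The paper itself does not give an independent argument for this lemma; it simply refers to \cite[Proposition~5]{KS88} for the case $\Omega=\RR$ and remarks that the proof carries over verbatim to a general interval. The argument in \cite{KS88} is precisely the quadratic-form splitting you wrote out: decompose $h^2\|u'\|^2+\int(V_1+V_2)|u|^2+E\|u\|^2$ as a convex combination, conclude that on the negative spectral subspace one of the two pieces must be strictly negative, and deduce the dimension inequality; the lower bound is then obtained by the same substitution $W_1=(1-\varepsilon)(V_1+V_2)$, $W_2=-(1-\varepsilon)V_2$. So your approach coincides with the one the paper invokes. The only point you leave implicit is that the rearrangement in the second inequality requires $\cN\big(h^2D^2-\tfrac{1-\varepsilon}{\varepsilon}V_2,\Omega,-E\big)<\infty$, which is where the hypothesis $V_2\in L^1(\Omega)$ is used (finiteness of the negative discrete spectrum for one-dimensional Schr\"odinger operators with integrable potentials); this is standard and does not constitute a gap.
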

The proof is given in \cite[Proposition 5]{KS88} for $\Omega=\RR$ and extends literally 
to the case of an arbitrary interval $\Omega$.

\begin{lem}\label{lem:equivpertur}
Let  $a>0$, $C>0$ and $\gamma\in(0,2)$. For $h>0$ we set
\begin{equation}
        \label{eq-AB}
A\equiv A(h) = \dfrac{\sqrt{C}}{h^{1-\frac{\gamma}{2}}}\,a,
\quad
B\equiv B(h) = \dfrac{1}{2h},
\end{equation}
then
	\[
		\cN\Big(D^2 -\dfrac{1}{4h^2x^2}-\one_{(A,B)}, (A,+\infty),-1\Big) \sim
		\frac{\gamma}{4\pi}\frac{|\log h|}{h} \text{ for } h\to 0^+.
	\]
\end{lem}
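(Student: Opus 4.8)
The plan is to treat this as a one-dimensional semiclassical eigenvalue-counting problem and extract the leading term by the Weyl/Dirichlet–Neumann bracketing technique, exactly as in the spirit of \cite{KS88}. Write $h' := h$ and observe that $\cN\big(D^2 - \frac{1}{4h^2x^2} - \one_{(A,B)}, (A,+\infty), -1\big)$ counts eigenvalues below $-1$ of $-u'' + W_h u$ on $(A,+\infty)$ with $W_h(x) = -\frac{1}{4h^2x^2} - \one_{(A,B)}(x)$. The indicator term $\one_{(A,B)}$ is a bounded perturbation supported on $(A,B)$; since on that interval $\frac{1}{4h^2x^2}$ is comparable to the large constant governing the count, I first want to argue that its contribution to the leading asymptotics is negligible. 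Concretely, I would apply Lemma~\ref{lem:comptvpeps} with $V_1 = -\frac{1}{4h^2x^2}$ and $V_2 = -\one_{(A,B)}$ (after truncating $V_1$ to a finite interval where needed so that $V_1\in L^1$, or working on $(A, B')$ with $B'$ a large multiple of $B$ and handling the tail separately) to sandwich the quantity between $\cN\big(D^2 - \frac{1}{4(1-\varepsilon)h^2x^2}, (A,+\infty), -1\big)$ plus an error $\cN\big(D^2 - \frac{1}{\varepsilon}\one_{(A,B)},(A,B),-1\big)$, and from below by the analogous expression with $1-\varepsilon$ and a subtracted term $\cN\big(D^2 + \frac{1-\varepsilon}{\varepsilon}\one_{(A,B)}, (A,B), -1\big) = 0$ (a nonnegative-potential perturbation of $-u''$ on a finite interval below $-1$ contributes nothing once the interval length is controlled — actually here one must be a touch careful since $D^2$ on $(A,B)$ has eigenvalues $\sim (k\pi/(B-A))^2$ which can be small, so the relevant estimate is $\cN(D^2, (A,B), -1)=0$ trivially, and $\cN(D^2 - \frac1\varepsilon \one, (A,B), -1) \lesssim \frac{B-A}{\pi}\sqrt{1/\varepsilon - 1} = \cO(\varepsilon^{-1/2}/h)$, which after dividing by $\frac{|\log h|}{h}$ tends to $0$).

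The core computation is then the clean model $\cN\big(D^2 - \frac{\kappa}{x^2}, (A,+\infty), -1\big)$ with $\kappa = \frac{1}{4(1\mp\varepsilon)h^2}$ large. For the pure inverse-square potential $-\kappa/x^2$ with $\kappa > 1/4$ the operator is in the limit-circle / oscillatory regime and the number of eigenvalues below a negative level $-E$ is governed by the WKB / Bohr–Sommerfeld count $\frac{1}{\pi}\int \sqrt{\kappa/x^2 - E}\,dx$ over the classically allowed region $x < \sqrt{\kappa/E}$. With $E = 1$ the allowed region is $(A, \sqrt\kappa)$ and
\[
\frac{1}{\pi}\int_A^{\sqrt\kappa} \sqrt{\frac{\kappa}{x^2} - 1}\ dx = \frac{1}{\pi}\Big(\sqrt{\kappa}\,\mathrm{arccosh}\frac{\sqrt\kappa}{A} - \sqrt{\kappa - A^2}\Big) \sim \frac{\sqrt\kappa}{\pi}\log\frac{2\sqrt\kappa}{A}\quad\text{as }\kappa\to\infty,
\]
uniformly while $A = A(h) = \sqrt{C}\,a\,h^{-(1-\gamma/2)}$ grows only like a power of $h$. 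Plugging $\sqrt\kappa \sim \frac{1}{2h}$ and $\log\frac{2\sqrt\kappa}{A} \sim \log(h^{-1}) + \log(\text{stuff}) - \log A \sim |\log h| + (1-\tfrac\gamma2)|\log h| \cdot(-1)\cdots$ — more carefully, $\log\frac{2\sqrt{\kappa}}{A} = \log\frac{1}{h(1\mp\varepsilon)^{1/2}} - \log\big(\sqrt C a h^{-(1-\gamma/2)}\big) = |\log h| - (1-\tfrac\gamma2)|\log h| + \cO(1) = \tfrac\gamma2 |\log h| + \cO(1)$ — gives the count $\sim \frac{1}{2\pi h}\cdot\frac{\gamma}{2}|\log h| = \frac{\gamma}{4\pi}\frac{|\log h|}{h}$. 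So the $\varepsilon$-dependence enters only through $\frac{1}{(1\mp\varepsilon)^{1/2}}\to 1$ and through bounded shifts inside the logarithm, hence disappears in the leading order; letting $\varepsilon\to 0$ after $h\to 0$ closes the bracketing.

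The step I expect to be the main obstacle is making the WKB count rigorous with \emph{uniform} control as $h\to 0$ while the left endpoint $A(h)$ itself diverges. Rather than invoke a black-box WKB theorem I would reduce to an exactly solvable comparison: after the substitution $x = e^t$ the operator $-u'' - \kappa x^{-2}u$ on $(A,\infty)$ becomes (up to the standard conjugation $u = e^{t/2}w$) $-w'' - (\kappa - \tfrac14) w$ on $(\log A, \infty)$, a \emph{constant-coefficient} operator, for which eigenvalue counts on an interval are completely explicit; the condition $-Ch^\gamma \leftrightarrow -1$ translates into a shifted spectral parameter and the count below $-1$ for $-u'' - \kappa x^{-2}u$ on $(A,\infty)$ maps, via Dirichlet–Dirichlet bracketing at $x = \sqrt\kappa$ together with standard comparison of $-\kappa/x^2$ with piecewise-constant potentials on dyadic subintervals of $(A,\sqrt\kappa)$, to a Riemann sum converging to the integral above. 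The finite-$A$, finite number of "bad" dyadic blocks near $x\sim\sqrt\kappa$ contribute only $\cO(\sqrt\kappa) = \cO(1/h)$, which is lower order than $\frac{|\log h|}{h}$, so the approximation error is absorbed. This, plus the earlier reduction discarding $\one_{(A,B)}$, yields the claimed asymptotics; I would organize the write-up as: (i) remove the indicator by Lemma~\ref{lem:comptvpeps}; (ii) logarithmic change of variables to a constant-coefficient model; (iii) dyadic bracketing to get the Riemann-sum estimate of the count; (iv) evaluate the integral and take $h\to0$ then $\varepsilon\to0$.
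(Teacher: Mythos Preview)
Your strategy---strip off the indicator via Lemma~\ref{lem:comptvpeps}, then compute a ``clean'' count $\cN\big(D^2-\kappa/x^2,(A,\infty),-1\big)$ by WKB---is workable but misses the structural shortcut the paper exploits. The indicator $\one_{(A,B)}$ is not a nuisance to be perturbed away: it is engineered so that on $(A,B)$ the threshold $-1$ and the $-\one_{(A,B)}$ cancel, while $B=\tfrac{1}{2h}$ is chosen so that $-\tfrac{1}{4h^2x^2}\ge -1$ on $(B,\infty)$ and the tail contributes nothing. After a single Dirichlet bracketing at $B$ (cost $\pm 1$) the count becomes exactly $\cN\big(D^2-\tfrac{1}{4h^2x^2},(A,B),0\big)$. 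At energy $0$ the equation $-u''-\tfrac{1}{4h^2x^2}u=0$ has the explicit oscillating solution $u(x)=\sqrt{x}\,\sin\big(\tfrac{\sqrt{1-h^2}}{2h}\log x\big)$, and Sturm's oscillation theorem gives the count as the number of zeros in $(A,B)$, i.e.\ $\tfrac{\sqrt{1-h^2}}{2\pi h}\log\tfrac{B}{A}+O(1)$, from which the asymptotics drops out in one line.

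By contrast, once you remove the indicator you are counting at energy $-1$, and your claimed reduction via $x=e^t$, $u=e^{t/2}w$ to a constant-coefficient operator is only correct at energy $0$: for $E=-1$ the transformed equation is $-w''-(\kappa-\tfrac14)w=-e^{2t}w$, which is not constant-coefficient (the solutions are modified Bessel functions of imaginary order). Your fallback dyadic bracketing can certainly be pushed through and your WKB integral $\tfrac{1}{\pi}\int_A^{\sqrt\kappa}\sqrt{\kappa/x^2-1}\,dx$ gives the right leading term, so the approach is not wrong; but it replaces an exact two-line computation by an approximation scheme with several layers of error control (the $\varepsilon$-splitting, the Riemann-sum error, the $O(1/h)$ boundary blocks), and your own hedging about step~(ii) reflects this. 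The payoff of the paper's route is that Lemma~\ref{lem:equivpertur} becomes the \emph{exact} base case, and the $\varepsilon$-perturbation machinery of Lemma~\ref{lem:comptvpeps} is reserved for the genuinely perturbative reduction in Lemma~\ref{lem:equivlem2}.
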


\begin{proof}
Due to the min-max principle we have
\begin{multline*}
\cN(h):= \cN\Big(D^2 -\dfrac{1}{4h^2x^2}-\one_{(A,B)}, (A,+\infty),-1\Big)\\
=\cN\Big(D^2 -\dfrac{1}{4h^2x^2}-\one_{(A,B)}, (A,B),-1\Big)\\
+\cN\Big(D^2 -\dfrac{1}{4h^2x^2}-\one_{(A,B)}, (B,+\infty),-1\Big) +\sigma_1(h)
\end{multline*}
with $\sigma_1(h)\in\{0,1\}$, and for the  second term on the right-hand side we have
\begin{multline*}
\cN\Big(D^2 -\dfrac{1}{4h^2x^2}-\one_{(A,B)}, (B,+\infty),-1\Big)=\cN\Big(D^2 -\dfrac{1}{4h^2x^2}, (B,+\infty),-1\Big)\\
\le \cN\Big(D^2 -\dfrac{1}{4h^2B^2}, (B,+\infty),-1\Big)
=\cN\Big(D^2, (B,+\infty),0\Big)=0,
\end{multline*}
hence,
\begin{multline*}
\cN(h)=\cN\Big( D^2 -\dfrac{1}{4h^2x^2}-1, (A,B),-1\Big)+\sigma_1(h)\\
=\cN\Big(h^2 D^2 -\dfrac{1}{4h^2x^2}, (A,B),0\Big) +\sigma_1(h).
\end{multline*}
Remark that
	\[
		-u''(x) - \frac{1}{4h^2x^2} u(x) = 0
		\text{ for }
		u(x) = \sqrt{x}\sin\Big(\dfrac{\sqrt{1-h^2}}{2h}\log x\Big),
	\]
therefore, using Sturm's oscillation theorem, see e.g. \cite[Theorem 3.4]{Sim05} we obtain
	\begin{multline*}
		\cN\Big( D^2 -\dfrac{1}{4h^2 x^2}, (A,B),0\Big)
		=\#\big\{x\in (A,B): u(x) = 0\big\}\\
		= \#\Big\{ k \in\ZZ : \dfrac{\sqrt{1-h^2}}{2 h} \log A
		< \pi k <
		\dfrac{\sqrt{1-h^2}}{2\pi h} \log B\Big\}
		= \dfrac{\sqrt{1-h^2}}{2 \pi h} \log \dfrac{B}{A}+\sigma_2(h)
	\end{multline*}
with $\sigma_2(h)\in\{-1,0\}$. Therefore,
	\[
		\Bigg|\,\cN(h) - \dfrac{\sqrt{1-h^2}}{2\pi h} \log \dfrac{B}{A}\Bigg|\leq 2,
	\]
and an elementary computation yields
	\[
		\frac{\sqrt{1-h^2}}{2\pi h} \log \dfrac{B(h)}{A(h)}\sim  \frac{\gamma}{4\pi} \frac{|\log h|}{h} \text{ for } h\to 0^+,
	\]
which concludes the proof.
\end{proof}

\begin{lem}\label{lem:equivlem2} Let  $a>0$, $C>0$ and $\gamma\in(0,2)$,
then
	\[
		\cN \Big(h^2D^2 - \dfrac{1}{4x^2},(a,+\infty),-Ch^\gamma\Big) \sim \dfrac{\gamma}{4\pi}\dfrac{|\log h|}{h}
		\text{ for } h\to 0^+.
	\]
\end{lem}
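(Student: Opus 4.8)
The plan is to reduce \cref{lem:equivlem2} to \cref{lem:equivpertur} by a rescaling of the spectral parameter, and then to use \cref{lem:comptvpeps} to absorb the error coming from the replacement of the full potential $-\frac{1}{4x^2}$ by the truncated one $-\frac{1}{4x^2}-\one_{(A,B)}$ on a bounded window. Concretely, I would start with the eigenvalue equation
\[
-h^2u''(x)-\dfrac{1}{4x^2}\,u(x)=-Ch^\gamma u(x),\quad x\in(a,+\infty),
\]
and divide through by $h^\gamma$, or equivalently observe that
\[
\cN\Big(h^2D^2-\dfrac{1}{4x^2},(a,+\infty),-Ch^\gamma\Big)
=\cN\Big(h^{2-\gamma}D^2-\dfrac{1}{4h^\gamma x^2},(a,+\infty),-C\Big).
\]
A further change of the independent variable, $x=h^{1-\frac{\gamma}{2}}\sqrt{C}\,y/(\sqrt C)$—more precisely the substitution that turns the lower endpoint $a$ into $A(h)=\sqrt{C}\,a\,h^{-(1-\gamma/2)}$ from \eqref{eq-AB}—should bring the operator into the form $D^2-\frac{1}{4h^2y^2}$ on $(A(h),+\infty)$ with spectral parameter $-1$, up to tracking the $h$-dependence of the scaling factor. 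This is the unitary rescaling step and is purely bookkeeping.

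Once the operator is in the form $D^2-\frac{1}{4h^2x^2}$ on $(A,+\infty)$ looked at below $-1$, I would compare it with the operator appearing in \cref{lem:equivpertur}, namely $D^2-\frac{1}{4h^2x^2}-\one_{(A,B)}$ on $(A,+\infty)$ below $-1$, with $B=B(h)=\frac{1}{2h}$. Since $\one_{(A,B)}\ge 0$, the min-max principle gives immediately the one-sided bound
\[
\cN\Big(D^2-\dfrac{1}{4h^2x^2},(A,+\infty),-1\Big)\le
\cN\Big(D^2-\dfrac{1}{4h^2x^2}-\one_{(A,B)},(A,+\infty),-1\Big),
\]
and by \cref{lem:equivpertur} the right-hand side is $\sim\frac{\gamma}{4\pi}\frac{|\log h|}{h}$. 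For the matching lower bound I would apply the second inequality of \cref{lem:comptvpeps} on $\Omega=(A,B)$ with $V_1=-\frac{1}{4h^2x^2}$ (here I would temporarily cut off the singular potential at the endpoints, or work with $\Omega=(A,B)$ where it is already in $L^\infty\cap L^1$) and $V_2=-\one_{(A,B)}$, choosing $\varepsilon=\varepsilon(h)\to 0$ slowly; the correction term $\cN\big(h^2D^2-\frac{1-\varepsilon}{\varepsilon}\,\one_{(A,B)},\cdot\big)$ is then an honest one-dimensional Schrödinger operator with a shallow square well of depth $O(1/\varepsilon)$ and width $B-A=O(1/h)$, whose eigenvalue count below $-1$ is $O\big(\frac{1}{h}\sqrt{1/\varepsilon}\big)$ — lower order than $\frac{|\log h|}{h}$ provided $\varepsilon$ decays to zero. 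Together with \cref{lem:equivpertur} applied to the main term, this pins down the asymptotics from below as well.

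Alternatively, and perhaps more cleanly, one may bypass \cref{lem:comptvpeps} entirely and argue as in the proof of \cref{lem:equivpertur}: split $(A,+\infty)=(A,B)\cup(B,+\infty)$ using Dirichlet bracketing, note that on $(B,+\infty)$ the potential $-\frac{1}{4h^2x^2}\ge -\frac{1}{4h^2B^2}=-1$ so the Dirichlet operator has no spectrum below $-1$, and on $(A,B)$ use the explicit solution $u(x)=\sqrt{x}\sin\!\big(\frac{\sqrt{1-h^2}}{2h}\log x\big)$ of $-u''-\frac{1}{4h^2x^2}u=0$ together with Sturm oscillation — exactly the computation already performed in \cref{lem:equivpertur}, with the only difference that the extra $\one_{(A,B)}$ is absent and one works below $-1$ instead. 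In that version \cref{lem:equivlem2} becomes essentially a corollary of the rescaling plus the same node-counting argument. I expect the main obstacle to be purely technical: making the rescaling precise so that the lower endpoint matches $A(h)$ in \eqref{eq-AB} and the spectral level becomes exactly $-1$, and handling the non-integrability of $\frac{1}{x^2}$ at $+\infty$ (resolved by the Dirichlet decomposition at $B$, after which everything lives on the bounded interval $(A,B)$). Verifying $\frac{\sqrt{1-h^2}}{2\pi h}\log\frac{B(h)}{A(h)}\sim\frac{\gamma}{4\pi}\frac{|\log h|}{h}$ is the same elementary computation already invoked in \cref{lem:equivpertur}, since $\log\frac{B}{A}=\log\frac{1}{2h}-\log\!\big(\sqrt{C}\,a\,h^{-(1-\gamma/2)}\big)=\big(1-(1-\frac{\gamma}{2})\big)|\log h|+O(1)=\frac{\gamma}{2}|\log h|+O(1)$.
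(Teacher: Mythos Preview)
Your rescaling is exactly the paper's first step, and your upper bound via plain monotonicity
\[
\cN\Big(D^2-\dfrac{1}{4h^2x^2},(A,+\infty),-1\Big)\le
\cN\Big(D^2-\dfrac{1}{4h^2x^2}-\one_{(A,B)},(A,+\infty),-1\Big)
\]
is correct and in fact slightly simpler than what the paper does for the upper bound (the paper also passes through Lemma~\ref{lem:comptvpeps} there).

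The problem is the lower bound, and specifically your ``cleaner'' alternative. The indicator $\one_{(A,B)}$ in Lemma~\ref{lem:equivpertur} is not decorative: its whole purpose is to shift the counting level from $-1$ to $0$ on $(A,B)$, so that one can invoke Sturm oscillation with the explicit solution $u(x)=\sqrt{x}\sin\!\big(\tfrac{\sqrt{1-h^2}}{2h}\log x\big)$ of $-u''-\tfrac{1}{4h^2x^2}u=0$. If you drop $\one_{(A,B)}$ and work below $-1$, that $u$ is a solution at energy $0$, not at energy $-1$; counting its zeros gives $\cN\big(D^2-\tfrac{1}{4h^2x^2},(A,B),0\big)$, which is only an \emph{upper} bound for the quantity $\cN\big(D^2-\tfrac{1}{4h^2x^2},(A,B),-1\big)$ that you need to bound from \emph{below}. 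So the direct node-counting route does not close the lower bound.

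Your first approach via Lemma~\ref{lem:comptvpeps} is the right one, but the decomposition is set up backwards. With your choice $V_1=-\tfrac{1}{4h^2x^2}$, $V_2=-\one_{(A,B)}$, the left-hand side of the second inequality in Lemma~\ref{lem:comptvpeps} is the Lemma~\ref{lem:equivpertur} quantity, and the right-hand side involves the quantity you want; this yields an upper bound on the latter, not a lower one. (Also, with that $V_2$ the correction term is $\cN\big(D^2+\tfrac{1-\varepsilon}{\varepsilon}\one_{(A,B)},\cdot,-1\big)=0$, not the square-well count you describe.) What is needed is to write the potential of \emph{our} operator as $V_1+V_2$ with
\[
V_1=-\dfrac{1}{4h^2x^2}-\dfrac{1}{1-\varepsilon}\,\one_{(A,B_-)},\qquad
V_2=\dfrac{1}{1-\varepsilon}\,\one_{(A,B_-)},
\]
so that $(1-\varepsilon)V_1=-\tfrac{1}{4h_-^2x^2}-\one_{(A,B_-)}$ with $h_-=h/\sqrt{1-\varepsilon}$ lands exactly on the operator of Lemma~\ref{lem:equivpertur}, while the correction becomes $\cN\big(D^2-\tfrac{1}{\varepsilon}\one_{(A,B_-)},\cdot,-1\big)=O\big(\tfrac{1}{h\sqrt{\varepsilon}}\big)$. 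Choosing $\varepsilon=1/|\log h|$ then makes the correction $o\big(\tfrac{|\log h|}{h}\big)$ and finishes the lower bound. This is precisely the paper's argument; your square-well estimate was the right computation, but attached to the wrong decomposition.
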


\begin{proof} We continue using the notation \eqref{eq-AB}. Consider the unitary map
\[
		V: u \in L^2(A,+\infty) \to L^2(a,+\infty),
		\quad
		(Vu)(x)
		= \bigg(\dfrac{\sqrt{C}}{h^{1-\frac{\gamma}{2}}}\bigg)^{\frac{1}{2}} u\bigg(\dfrac{\sqrt{C}}{h^{1-\frac{\gamma}{2}}}x\bigg),
\]
then in view of
\[
V^{-1}\Big( -h^2\dfrac{d^2}{dx^2}-\dfrac{1}{4x^2}\Big)V=Ch^\gamma\Big(-\dfrac{d^2}{dx^2}-\dfrac{1}{4h^2x^2}\Big)
\]
one has
\[
\cN(h):=\cN \Big(h^2D^2 - \dfrac{1}{4x^2},(a,+\infty),-Ch^\gamma\Big)
=\cN \Big(D^2 - \dfrac{1}{4h^2x^2},(A,+\infty),-1\Big).
\]
Let us denote
\begin{equation*}
\begin{aligned}
\varepsilon&\equiv\varepsilon(h) := \dfrac{1}{|\log h|},\\
h_+&:=h\sqrt{1-\varepsilon}, & A_+&:=A(h_+), & B_+&:=B(h_+),\\
h_-&:=\dfrac{h}{\sqrt{1-\varepsilon}}, & A_-&:=A(h_-), & B_-&:=B(h_-).
\end{aligned}
\end{equation*}
Now we prove separately the upper and lower bounds for $\cN(h)$.

To obtain an upper bound for $\cN(h)$, we apply first Lemma~\ref{lem:comptvpeps},
\begin{multline*}
\cN(h)=
\cN \bigg(D^2 +\Big(- \dfrac{1}{4h^2x^2}-(1-\varepsilon)\one_{(A,B^+)}\Big)  + (1-\varepsilon)\one_{(A,B^+)},(A,+\infty),-1\bigg)\\
\le
\cN \Big(D^2-\dfrac{1}{4h_+^2x^2}-\one_{(A,B^+)},(A,+\infty),-1\Big)\\
+\cN \Big(D^2+ \dfrac{1-\varepsilon}{\varepsilon}\one_{(A,B^+)},(A,+\infty),-1\Big),
\end{multline*}
and one remarks that the last term is equal to $0$. Furthermore, one has $A_+>A$, and the min-max principle
gives
\begin{multline*}
\cN \Big(D^2-\dfrac{1}{4h_+^2x^2}-\one_{(A,B^+)},(A,+\infty),-1\Big)\\
=
\cN \Big(D^2-\dfrac{1}{4h_+^2x^2}-\one_{(A,B^+)},(A,A_+),-1\Big)\\
+
\cN \Big(D^2-\dfrac{1}{4h_+^2x^2}-\one_{(A,B^+)},(A_+,+\infty),-1\Big)
+\sigma_1(h)
\end{multline*}
with $\sigma_1(h)\in\{0,1\}$. For the first term on the right-hand side we have
\begin{multline*}
\cN \Big(D^2-\dfrac{1}{4h_+^2x^2}-\one_{(A,B^+)},(A,A_+),-1\Big)
=\cN \Big(D^2-\dfrac{1}{4h_+^2x^2},(A,A_+),0\Big)\\
\le \cN \Big(D^2,(A,A_+),\dfrac{1}{4h_+^2 A^2}\Big)
= \#\Big\{
n\in \NN: \dfrac{\pi^2 n^2}{(A_+-A)^2}<\dfrac{1}{4h_+^2 A^2}
\Big\}\\
\le \dfrac{1}{2\pi h_+} \dfrac{A_+-A}{A}=\dfrac{1}{2\pi h\sqrt{1-\varepsilon}}
\Big((1-\varepsilon)^{-\frac{1}{2}(1-\frac{\gamma}{2})}-1\Big)\\
=
\cO \Big( \dfrac{1}{h |\log h|}\Big)=o\Big( \dfrac{|\log h|}{h }\Big),
\end{multline*}
while due to Lemma~\ref{lem:equivpertur} one has
\begin{multline*}
\cN \Big(D^2-\dfrac{1}{4h_+^2x^2}-\one_{(A,B^+)},(A_+,+\infty),-1\Big)\\
=\cN \Big(D^2-\dfrac{1}{4h_+^2x^2}-\one_{(A^+,B^+)},(A_+,+\infty),-1\Big)\\
\sim \dfrac{\gamma}{4\pi} \dfrac{|\log h_+|}{h_+}\sim \dfrac{\gamma}{4\pi} \dfrac{|\log h|}{h}
\text{ for } h\to 0^+.
\end{multline*}
Therefore,
\[
\cN(h)\le \dfrac{\gamma}{4\pi} \dfrac{|\log h|}{h}\Big(1+o(1)\Big)
\text{ for } h\to 0^+.
\]

To obtain a lower bound for $\cN(h)$ we use again Lemma \ref{lem:comptvpeps},
\begin{multline}
    \label{eq-min1}
\cN(h)=\cN\bigg(
D^2+\Big(- \frac1{4h^2x^2} - \dfrac{1}{1-\varepsilon}\,\one_{(A,B_-)}\Big) + \dfrac{1}{1-\varepsilon}\,\one_{(A,B_-)},(A,+\infty),-1
\bigg)\\
\ge
\cN\Big(D^2- \frac1{4h_-^2x^2} - \one_{(A,B_-)},(A,+\infty),-1\Big)\\
-
\cN\Big(D^2- \dfrac{1}{\varepsilon}\, \one_{(A,B_-)},(A,+\infty),-1\Big).
\end{multline}
The last term can be easily estimated using the min-max principle
and the positivity of the Dirichlet Laplacian, which gives, with some $\sigma_2(h)\in\left\{0,1\right\}$,
\begin{multline*}
\cN\Big(D^2- \dfrac{1}{\varepsilon}\, \one_{(A,B_-)},(A,+\infty),-1\Big)\\
\le\cN\Big(D^2- \dfrac{1}{\varepsilon}\, \one_{(A,B_-)},(A,B_-),-1\Big)
+\cN\Big(D^2- \dfrac{1}{\varepsilon}\, \one_{(A,B_-)},(B_-,+\infty),-1\Big)
+\sigma_2(h)\\
=\cN\Big(D^2,(A,B_-),\dfrac{1-\varepsilon}{\varepsilon}\Big)
+ \cN\Big(D^2,(B_-,+\infty),-1\Big)
+\sigma_2(h)\\
=\cN\Big(D^2,(A,B_-),\dfrac{1-\varepsilon}{\varepsilon}\Big)+\sigma_2(h)
\le \dfrac{B_- -A}{\pi} \sqrt{\dfrac{1-\varepsilon}{\varepsilon}}
+\sigma_2(h)\\
=\cO\Big( \dfrac{\sqrt{|\log h|}}{h}\Big)=o\Big( \dfrac{|\log h|}{h}\Big).
\end{multline*}
It remains to estimate the first term on the right-hand side of \eqref{eq-min1}. As $A_-<A$,
using the min-max-principle one can estimate, with $\sigma_3(h)\in\{0,1\}$,
\begin{multline*}
\cN\Big(D^2- \frac1{4h_-^2x^2} - \one_{(A_-,B_-)},(A_-,+\infty),-1\Big)\\
=\cN\Big(D^2- \frac1{4h_-^2x^2} - \one_{(A_-,A)},(A_-,A),-1\Big)\\
+\cN\Big(D^2- \frac1{4h_-^2x^2} - \one_{(A,B_-)},(A,+\infty),-1\Big)+\sigma_3(h),
\end{multline*}
which implies
\begin{multline*}
\cN\Big(D^2- \frac1{4h_-^2x^2} - \one_{(A,B_-)},(A,+\infty),-1\Big)\\
\ge
\cN\Big(D^2- \frac1{4h_-^2x^2} - \one_{(A_-,B_-)},(A_-,+\infty),-1\Big)\\
-\cN\Big(D^2- \frac1{4h_-^2x^2} - \one_{(A_-,A)},(A_-,A),-1\Big)-1.
\end{multline*}
Due to Lemma~\ref{lem:equivpertur} one has
\[
\cN\Big(D^2- \frac1{4h_-^2x^2} - \one_{(A_-,B_-)},(A_-,+\infty),-1\Big)\sim \dfrac{\gamma}{4\pi} \dfrac{|\log h_-|}{h_-}
\sim \dfrac{\gamma}{4\pi} \dfrac{|\log h|}{h} \text{ for } h\to 0^+.
\]
Finally, by an explicit computation,
\begin{multline*}
\cN\Big(D^2- \frac1{4h_-^2x^2} - \one_{(A_-,A)},(A_-,A),-1\Big)\\
=\cN\Big(D^2- \frac1{4h_-^2x^2},(A_-,A),0\Big)\le \cN\Big(D^2- \frac1{4h_-^2A_-^2},(A_-,A),0\Big)\\
=\cN\Big(D^2,(A_-,A), \frac1{4h_-^2A_-^2}\Big)\le \dfrac{1}{2\pi h_-} \dfrac{A-A_-}{A_-}\\
=\dfrac{\sqrt{1-\varepsilon}}{2\pi h}\Big( (1-\varepsilon)^{-\frac{1}{2}(1-\frac{\gamma}{2})}-1\Big)
=\cO \Big( \dfrac{1}{h|\log h|}\Big)
=o \Big( \dfrac{|\log h|}{h}\Big),
\end{multline*}
which concludes the proof of the lower bound.
\end{proof}

\begin{proof}[\bf Proof of Proposition~\ref{prop:weyl}]
As changing the boundary condition at $a$ can alter the eigenvalue counting function at most by one, it is sufficient to consider
the case of Dirichlet boundary conditions. Let us pick $\varepsilon \in (0,1)$ and set
\[
h_+:=h\sqrt{1-\varepsilon}, \quad h_-:=\dfrac{h}{\sqrt{1-\varepsilon}}.
\]
To obtain the upper bound we remark that due to the assumption \eqref{eq-vdelta}
one can find $b>a$ such that
\[
-\dfrac{1}{4x^2}+v(x)\ge -\dfrac{1}{4(1-\varepsilon)x^2} \text{ for } x>b.
\]
Furthermore, denote
\[
A:=\Big\| -\dfrac{1}{4x^2}+v\Big\|_{L^\infty(a,b)},
\]
then due to the min-max principle one has
\begin{multline}
   \label{eqn:upbcount}
\cN \Big(h^2D^2-\dfrac{1}{4x^2}+v,(a,+\infty),-Ch^\gamma\Big)\\
\le
\cN \Big(h^2D^2-\dfrac{1}{4x^2}+v,(a,b),-Ch^\gamma\Big)
+
\cN \Big(h^2D^2-\dfrac{1}{4x^2}+v,(b,+\infty),-Ch^\gamma\Big)+1\\
\le 
\cN \Big(h^2D^2,(a,b),A\Big)
+\cN \Big(h^2D^2-\dfrac{1}{4(1-\varepsilon)x^2},(b,+\infty),-Ch^\gamma\Big)+1.
\end{multline}
An explicit computation gives
\[
\cN \Big(h^2D^2,(a,b),A\Big)\le \dfrac{(b-a)\sqrt{A}}{\pi h}= o\Big(\dfrac{|\log h|}{h}\Big),
\]
while due to Lemma \ref{lem:equivlem2} for $h\to 0^+$ one has
\begin{multline*}
\cN \Big(h^2D^2-\dfrac{1}{4(1-\varepsilon)x^2},(b,+\infty),-Ch^\gamma\Big)\\
=\cN \Big(h_+^2D^2-\dfrac{1}{4x^2},(b,+\infty),-C(1-\varepsilon)^{1-\frac{\gamma}{2}}h_+^\gamma\Big)\\
\sim \dfrac{\gamma}{4\pi} \dfrac{|\log h_+|}{h_+}\sim \dfrac{1}{\sqrt{1-\varepsilon}}\dfrac{\gamma}{4\pi} \dfrac{|\log h|}{h},
\end{multline*}
and the substitution into \eqref{eqn:upbcount} shows that
\begin{equation}
   \label{eq-up}
	\cN\Big(h^2D^2-\dfrac{1}{4x^2}+v,(a,+\infty),-Ch^\gamma\Big)
\le
\dfrac{\gamma}{4\pi} \dfrac{|\log h|}{h} \Big(\dfrac{1}{\sqrt{1-\varepsilon}}+o(1)\Big) \text{ as } h\to 0^+.
\end{equation}

For the lower bound we remark that one can find some $b>a$ such that
\[
-\dfrac{1}{4x^2}+v(x)\le -\dfrac{1-\varepsilon}{4x^2} \text{ for } x>b,
\]
then the min-max principle and Lemma~\ref{lem:equivlem2} yield
\begin{multline}\label{eq-down}
	\cN\Big(h^2D^2-\dfrac{1}{4x^2}+v,(a,+\infty),-Ch^\gamma\Big)
	\ge \cN\Big(h^2D^2-\dfrac{1}{4x^2}+v,(b,+\infty),-Ch^\gamma\Big)\\
	\ge
	\cN \Big(h_-^2D^2-\dfrac{1}{4x^2},(b,+\infty),-(1-\varepsilon)^{\frac{\gamma}{2}-1}Ch_-^{\gamma}\Big)\\
	\sim \dfrac{\gamma}{4\pi} \dfrac{|\log h_-|}{h_-}\sim \sqrt{1-\varepsilon}\,\dfrac{\gamma}{4\pi} \dfrac{|\log h|}{h}
	\text{ as } h\to 0^+.
		\end{multline}
As $\varepsilon\in(0,1)$ in both \eqref{eq-up} and \eqref{eq-down} can be taken arbitrarily small,
the claim follows.
\end{proof}

\section{Proofs of the main results}\label{sec:proofth}

\subsection{Reduction to a domain independent on $\theta$}\label{sub:redindtheta} Before passing to the proof of Theorem \ref{thm:mainth} we reformulate the problem
in a domain independent of $\theta$. To this aim, consider the unitary transform $V_\theta: L^2(\mathbb R^3) \to L^2(\mathbb R^3)$
given by
\[
(V_\theta u)(x_1,x_2,x_3):= 
\cos^2\theta \sqrt{\dfrac{2\sqrt 2}{\sin \theta}}\ u\left(\sqrt{2}\cos\theta x_1,\ \sqrt{2}\cos\theta x_2, \frac{\sqrt{2}\cos^2\theta}{\sin\theta}x_3\right),
\]
then $V_\theta$ is an isomorphism of $H^1(\mathbb R^3)$, and for $u\in H^1(\mathbb{R}^3)$ one has
\begin{equation}\label{eqn:unitequiv}
		\ell_\theta(u,u) = \frac{1+\tan^2\theta}{2} \,q_{\tan \theta}(V_\theta u,V_\theta u),
\end{equation}
where the sesquilinear form $q_h$ is defined on $H^1(\mathbb{R}^3)$ by
	\begin{equation}\label{eqn:defqtilde}
		q_h (u,u) = \iiint_{\mathbb{R}^3}\Big(h^2|\partial_{x_3}u|^2 + |\partial_{x_1} u|^2 + |\partial_{x_2} u|^2\Big)dx_1dx_2dx_3 - \iint_{C_{\frac{\pi}{4}}}|u|^2d\sigma.
	\end{equation}
Let $Q_h$ be the self-adjoint operator associated with the sesquilinear form $q_h$, then due to \eqref{eqn:unitequiv} 
one has the unitary equivalence
	\begin{equation*}
		L_\theta= \frac{1+\tan^2\theta}{2}\, V_\theta^{-1} Q_{\tan \theta} V_\theta,
	\end{equation*}
and
\[
	\sigma_{\rm ess}(Q_h) = \Big[-\frac{1}{2(1+h^2)},+\infty\Big).
\]
As mentioned in the introduction, the discrete spectrum of $Q_h$ is infinite for any $h$, in particular
\begin{equation}
   \label{enh}
E_n(Q_h)<-\dfrac{1}{4} \text{ for  all } n\in \NN \text{ and } h\in(0,1).
\end{equation}
Due to the elementary asymptotics
\[
\dfrac{1+\tan^2\theta}{2}= \dfrac{1}{2}+\mathcal O(\theta^2) \text{ as } \theta\to 0,
\]
Theorem \ref{thm:mainth} and Theorem \ref{thm:mainth2} become consequences of the following results
for $Q_h$:
\begin{prop}\label{propmain}
\textup{(a)} For each fixed $n\in \mathbb N$ and $h\to 0^+$ one has
	\[
		E_n(Q_h) = -2a_0 + 2(2n-1) a_1 h + \mathcal{O}(h^{\frac{3}{2}}).
	\]
\textup{(b)} For any $\gamma\in(0,\frac{3}{2})$ and any $C>0$ there holds
	\[
		\cN(Q_h,-Ch^\gamma)= \dfrac{\gamma}{4\pi }\dfrac{|\log h|}{h} + o\Big( \dfrac{\log h}{h}\Big) \text{ for } h\to 0^+.
	\]
\end{prop}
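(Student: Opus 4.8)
The plan is to run the Born--Oppenheimer scheme dictated by the form $q_h$ in~\eqref{eqn:defqtilde}: only $\partial_{x_3}$ carries the small parameter $h$, so $x_3$ is the slow variable and, for frozen $x_3=t$, the transverse operator in $L^2(\RR^2_{x_1,x_2})$ is the two-dimensional Schr\"odinger operator with a $\delta$-interaction of strength $\sqrt{2}$ on the circle $\{x_1^2+x_2^2=t^2\}$ (the factor $\sqrt{2}$ coming from $d\sigma=\sqrt{2}\,ds\,dx_3$ on $C_{\pi/4}$), that is, $B_{t,\sqrt{2}}$ for $t>0$ and the free Laplacian for $t\le0$. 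The effective one-dimensional potential is therefore $W(t):=\mu_1(t;\sqrt{2})$ for $t>0$ and $W(t):=0$ for $t\le0$; by Proposition~\ref{prop:2Dmodel}(b) with $\beta=\sqrt{2}$ it is continuous, bounded and $C^\infty$ on $(0,+\infty)$, satisfies $W(t)\to0$ as $t\to0^+$ and $W(t)=-\tfrac12-\tfrac1{4t^2}+\cO(t^{-3})$ as $t\to+\infty$, and has a unique non-degenerate minimum at $t=\xi_0$ with $W(\xi_0)=-2a_0$ and $W''(\xi_0)=8a_1^2$. Note that $-2a_0<-\tfrac12<0$, that $\inf\sigma_{\mathrm{ess}}(Q_h)=-\tfrac1{2(1+h^2)}=-\tfrac12+\cO(h^2)$ (so in part (b) the quantity to estimate is the number of eigenvalues of $Q_h$ below $-\tfrac1{2(1+h^2)}-Ch^\gamma$), and that Proposition~\ref{prop:harmapprox} applies to $-h^2\,d^2/dt^2+W$ on any interval $(b_0,+\infty)$ with $0\le b_0<\xi_0$ because the $\liminf$ of $W$ at each endpoint strictly exceeds $W(\xi_0)$.

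The ``easy'' inequalities --- the upper bound for $E_n(Q_h)$ in part (a) and the lower bound for the counting function in part (b) --- are obtained with the trial functions $u(x_1,x_2,t)=w(t)\,\Phi_{t,\sqrt{2}}(x_1,x_2)$, where $\Phi_{t,\sqrt{2}}$ is the normalised transverse ground state and $w\in C^\infty_c$ is supported, respectively, in a fixed neighbourhood of $\xi_0$ or in a fixed interval $(b_0,+\infty)$ with $b_0>0$. Using $\|\Phi_{t,\sqrt{2}}\|=1$, $\langle\Phi_{t,\sqrt{2}},\partial_t\Phi_{t,\sqrt{2}}\rangle=0$ and, from Proposition~\ref{prop:2Dmodel}(c), a uniform bound $\|\partial_t\Phi_{t,\sqrt{2}}\|\le M$ on the support of $w$, a direct computation gives
\[
\frac{q_h(u,u)}{\|u\|_{L^2(\RR^3)}^2}\le\frac{\int\big(h^2|w'|^2+(W(t)+M^2h^2)|w|^2\big)\,dt}{\int|w|^2\,dt},
\]
and the min--max principle reduces both bounds to facts about $-h^2\,d^2/dt^2+W$: Proposition~\ref{prop:harmapprox} near $\xi_0$, and (since $W(t)+\tfrac12=-\tfrac1{4t^2}+o(t^{-2})$ as $t\to+\infty$) Proposition~\ref{prop:weyl} on $(b_0,+\infty)$.

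The real work is the opposite inequality (lower bound for $E_n(Q_h)$, upper bound for the counting function). I would first localize in $x_3$ by an IMS partition of unity cutting at a fixed $b_0>0$ chosen so that the transverse ground energy exceeds $-\tfrac12$ on $\{t<b_0\}$ (possible since $W\to0$ at $0^+$): the localization error is $\cO(h^2)$, the piece of $Q_h$ on $(-\infty,b_0)$ is bounded below by $-\tfrac12$ and contributes nothing below $-\tfrac1{2(1+h^2)}-Ch^\gamma$ (and, in part (a), no state below $-\tfrac12$), while on $(b_0,+\infty)$ the bound $\|\partial_t\Phi_{t,\sqrt{2}}\|\le M$ holds. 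On $(b_0,+\infty)$ I would then split $u(\cdot,\cdot,t)=w(t)\Phi_{t,\sqrt{2}}+r(\cdot,\cdot,t)$ with $r(\cdot,\cdot,t)\perp\Phi_{t,\sqrt{2}}$ and combine
\[
\|\partial_t u(t)\|^2\ge\big|\partial_t w-\langle\partial_t\Phi_{t,\sqrt{2}},r\rangle\big|^2\ge(1-\delta)|\partial_t w|^2-\tfrac{M^2}{\delta}\|r\|^2
\]
with $b_{t,\sqrt{2}}(u(t),u(t))\ge W(t)|w(t)|^2+\mu_2(t;\sqrt{2})\|r(t)\|^2$ and $\mu_2(t;\sqrt{2})>-\tfrac12$ from Proposition~\ref{prop:2Dmodel}(d), obtaining, up to the $\cO(h^2)$ error, the operator bound
\[
Q_h\ge\Big(-(1-\delta)h^2\tfrac{d^2}{dt^2}+W\Big)\oplus\Big(\big(-\tfrac12-\tfrac{M^2h^2}{\delta}\big)\mathrm{Id}\Big).
\]
Taking $\delta$ equal to a small power of $h$, say $\delta=h^{1/2}$, keeps the second summand above $-2a_0$ and turns the first into $-\widetilde h^2\,d^2/dt^2+W$ with $\widetilde h=h\sqrt{1-\delta}=h+\cO(h^{3/2})$, so Proposition~\ref{prop:harmapprox} yields $E_n(Q_h)\ge-2a_0+2(2n-1)a_1h+\cO(h^{3/2})$; taking instead $\delta$ a suitable multiple of $h^{2-\gamma}$ keeps the second summand above $-\tfrac1{2(1+h^2)}-Ch^\gamma$ (so it contributes no eigenvalue) while Proposition~\ref{prop:weyl} applied to the first summand gives the counting-function upper bound $\tfrac{\gamma}{4\pi}\tfrac{|\log h|}{h}(1+o(1))$.

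The point I expect to be most delicate is this trade-off in $\delta$: the cross term $\tfrac{M^2}{\delta}\|r\|^2$ must stay dominated (and it is controlled, via Proposition~\ref{prop:2Dmodel}(c), only for $t$ away from $0$, which is precisely why the cut at $b_0$ is needed), $\widetilde h$ must differ from $h$ by $\cO(h^{3/2})$ so as not to spoil the remainder in part (a), and the excited transverse channels must never reach the relevant spectral window --- which works only because $\mu_2(t;\sqrt{2})>-\tfrac12>-2a_0$. Once Proposition~\ref{propmain} is established, the elementary asymptotics $\tfrac{1+\tan^2\theta}{2}=\tfrac12+\cO(\theta^2)$ and $\tan\theta=\theta+\cO(\theta^3)$ together with the equivalence $L_\theta=\tfrac{1+\tan^2\theta}{2}V_\theta^{-1}Q_{\tan\theta}V_\theta$ transfer (a) and (b) to Theorems~\ref{thm:mainth} and~\ref{thm:mainth2}.
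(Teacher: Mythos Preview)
Your proposal is correct and follows essentially the same Born--Oppenheimer strategy as the paper: trial states $f(x_3)\Phi_{x_3}$ for the upper bound, and for the lower bound a localization in $x_3$ away from the origin followed by projection onto the transverse ground state, reducing everything to the one-dimensional operator $-h^2\,d^2/dt^2+\mu_1(t;\sqrt2)$ and then to Propositions~\ref{prop:harmapprox} and~\ref{prop:weyl}. The only differences are cosmetic: the paper uses Neumann bracketing rather than an IMS partition for the cut, and makes a single choice $\varepsilon=h^{1/2}$ (putting the cross term on the full $\|u\|^2$ as a shift $Mh^{3/2}$), so that the restriction $\gamma<\tfrac32$ in part~(b) appears precisely as the condition $h^{3/2}=o(h^\gamma)$, whereas your $\gamma$-dependent choice of~$\delta$ would in fact handle the slightly wider range $\gamma<2$.
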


The analysis will be essentially based on a special decomposition of the form~$q_h$.
Namely, recall that the forms $b_{R,\beta}$, the operators $B_{R,\beta}$ and the eigenvalues $\mu_j(R;\beta)$ are defined
in Subsection~\ref{circ2}, and in the present section we denote
\[
w_R:=b_{R,\sqrt 2}, \quad W_R:=B_{R,\sqrt{2}},\quad \mu_j(R):=\mu_j(R,\sqrt 2).
\]
Remark that due to Proposition~\ref{prop:2Dmodel} the function $(0,+\infty)\ni R\mapsto \mu_1(R)$
satisfies
\begin{equation}
  \label{mu01}
	\lim_{R\to 0} \mu_1(R)=0, \quad
	\mu_1(R)= -\dfrac{1}{2}-\dfrac{1}{4R^2}+\cO\Big(\dfrac{1}{R^4}\Big),
\end{equation}
and has a unique minimum at $R=\xi_0$ with
\begin{equation}
    \label{muxi}
\mu_1(\xi_0)=-2a_0, \quad \mu_1''(\xi_0)=8a_1^2.
\end{equation}
For subsequent constructions, let us pick an arbitrary $\rho\in(0,\xi_0)$ such that
\begin{equation} 
  \label{muh}
\mu_1(R)>-\dfrac{1}{4} \text{ for } R\in (0,\rho),
\end{equation}
which exists due to the above properties of $\mu_1$.

Let $\Phi_R$ denote the normalized non-negative eigenfunction of $C_R$
for the eigenvalue $\mu_1(R)$. In virtue of  the spectral theorem and Proposition~\ref{prop:2Dmodel}(d) we have
\begin{equation}
    \label{spth}
w_R(v,v)\ge -\dfrac{1}{2} \|v\|^2_{L^2(\RR^2)} \text{ for any } v\in H^1(\RR^2)
\text{ with } \big\langle v,\Phi_R\big\rangle_{L^2(\RR^2)}=0.
\end{equation}

Furthermore, by Fubini's theorem one can can represent
	\begin{equation}\label{eqn:qtilredu}
		\begin{split}
		q_h(u,u) 	=& \iiint_{\mathbb{R}^2\times\mathbb{R}_-}\big(h^2 |\partial_{x_3}u|^2 + |\partial_{x_1} u|^2 + |\partial_{x_2} u|^2\big)dx_1dx_2dx_3\\
					& + h^2\iiint_{\mathbb{R}^2\times\mathbb{R}_+}|\partial_{x_3}u|^2dx_1dx_2dx_3
					 + \int_{0}^{\infty} w_{x_3}\big(u(\cdot,\cdot,x_3),u(\cdot,\cdot,x_3)\big) dx_3.
		\end{split}
	\end{equation}

\subsection{Upper bound for the eigenvalues of $Q_h$}

For $h>0$, denote by $T_h$
the operator in $L^2(\rho,+\infty)$ acting as
\[
(T_h f)(t)=-h^2f''(t)+\mu_1(t) f(t)
\]
on the  functions satisfying the Dirichlet boundary condition at $\rho$.
Remark that the associated sesquilinear form is
\[
t_h(f,f)=\int_\rho ^\infty \Big(h^2|f'|^2 +\mu_1(t) |f|^2\Big)dt, \quad f\in H^1_0(\rho,\infty).
\]
The goal of this subsection is to obtain the following upper bound for the eigenvalues of $Q_h$
in terms of $T_h$. Recall that due to the result of \cite{KS88} and the asymptotics \eqref{mu01}
the operator $T_h$ has infinitely many eigenvalues in $(-\infty,-\frac{1}{2})$ for any $h\in(0,1)$. 
\begin{prop}\label{uprop}
There exists $M>0$ such that for any $n\in\NN$ and any $h\in(0,1)$
there holds $E_n(Q_h)\le E_n(T_h)+Mh^2$.
\end{prop}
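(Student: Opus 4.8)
The plan is to exhibit an $n$-dimensional subspace $V\subset\cD(q_h)$ on which the Rayleigh quotient of $q_h$ is bounded above by $E_n(T_h)+Mh^2$, and then conclude by the min-max principle. The natural candidate is built by tensoring the first eigenfunction $\Phi_t$ of $W_t$ in the transverse variables with eigenfunctions of $T_h$ in the longitudinal variable $x_3$. Concretely, let $g_1,\dots,g_n\in H^1_0(\rho,\infty)$ be the first $n$ eigenfunctions of $T_h$, extend each $g_k$ by zero to $(0,\infty)$, and set
\[
u_k(x_1,x_2,x_3):=g_k(x_3)\,\Phi_{x_3}(x_1,x_2),\qquad k=1,\dots,n.
\]
These functions are supported in $\{x_3>\rho\}$, so the first two integrals in the decomposition \eqref{eqn:qtilredu} involve only the region $x_3>\rho$, and the $L^2$-normalization together with the fact that $\langle\partial_{x_3}\Phi_{x_3},\Phi_{x_3}\rangle=\tfrac12\partial_{x_3}\|\Phi_{x_3}\|^2=0$ makes the $u_k$ orthonormal in $L^2(\RR^3)$ whenever the $g_k$ are orthonormal in $L^2(\rho,\infty)$. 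One must first check that $u_k\in H^1(\RR^3)$: this uses Proposition~\ref{prop:2Dmodel}(a) for the $C^1$-dependence $x_3\mapsto\Phi_{x_3}$ and Proposition~\ref{prop:2Dmodel}(c) (with $a=\rho$) for the uniform $L^2$-bound on $\partial_{x_3}\Phi_{x_3}$ away from the origin.

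The next step is to compute $q_h(u,u)$ for a general $u=\sum_k c_k u_k$. Writing $g:=\sum_k c_k g_k\in H^1_0(\rho,\infty)$, one has $u(\cdot,\cdot,x_3)=g(x_3)\Phi_{x_3}$, so the last term of \eqref{eqn:qtilredu} is $\int_\rho^\infty |g(x_3)|^2\, w_{x_3}(\Phi_{x_3},\Phi_{x_3})\,dx_3=\int_\rho^\infty \mu_1(x_3)|g(x_3)|^2\,dx_3$ using $\|\Phi_{x_3}\|=1$. For the transverse-gradient terms in the first integral, $|\partial_{x_1}u|^2+|\partial_{x_2}u|^2=|g(x_3)|^2|\nabla_{x_1,x_2}\Phi_{x_3}|^2$, but this is exactly the gradient part of $w_{x_3}(\Phi_{x_3},\Phi_{x_3})$, so it is already accounted for — one must instead split $w_R$ into its gradient and boundary pieces. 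Doing this bookkeeping carefully, one finds
\[
q_h(u,u)=\int_\rho^\infty\Big(h^2|g'(x_3)|^2+\mu_1(x_3)|g(x_3)|^2\Big)dx_3 + h^2\iiint_{\{x_3>\rho\}} |g(x_3)|^2\,|\partial_{x_3}\Phi_{x_3}|^2\,dx_1dx_2dx_3,
\]
where the cross term $2h^2\mathrm{Re}\int g'\bar g\langle\partial_{x_3}\Phi_{x_3},\Phi_{x_3}\rangle$ vanishes because $\langle\partial_{x_3}\Phi_{x_3},\Phi_{x_3}\rangle=0$. The first integral is exactly $t_h(g,g)$. The remaining term is $h^2\int_\rho^\infty |g(x_3)|^2\|\partial_{x_3}\Phi_{x_3}\|_{L^2(\RR^2)}^2\,dx_3\le h^2 M_\rho^2\int_\rho^\infty|g|^2 = h^2 M_\rho^2\|u\|^2_{L^2(\RR^3)}$ by Proposition~\ref{prop:2Dmodel}(c), with $M=M_\rho^2$. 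Hence $q_h(u,u)\le t_h(g,g)+Mh^2\|u\|^2$ for all $u\in V:=\mathrm{span}\{u_1,\dots,u_n\}$, and since $\|u\|^2_{L^2(\RR^3)}=\|g\|^2_{L^2(\rho,\infty)}$ and $t_h(g,g)/\|g\|^2\le E_n(T_h)$ on the $n$-dimensional span of the first $n$ eigenfunctions, the min-max principle gives $E_n(Q_h)\le E_n(T_h)+Mh^2$, as claimed.

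I expect the main obstacle to be the $H^1$-regularity and the careful accounting in the energy computation, rather than the final application of min-max: one must confirm that $x_3\mapsto\Phi_{x_3}$ is differentiable with $\partial_{x_3}\Phi_{x_3}\in L^2$ uniformly on $(\rho,\infty)$ — supplied by parts (a) and (c) of Proposition~\ref{prop:2Dmodel} — and that the product rule really does produce the stated decomposition with the cross term annihilated by the normalization identity $\partial_{x_3}\|\Phi_{x_3}\|^2_{L^2(\RR^2)}=0$. A minor point is that the Dirichlet condition at $\rho$ for the $g_k$ guarantees the extensions by zero stay in $H^1$, so no regularization near $x_3=\rho$ is needed; and the constant $M$ is manifestly independent of $n$ and of $h\in(0,1)$ since it depends only on $\rho$ through $M_\rho$.
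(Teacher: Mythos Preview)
Your proof is correct and follows essentially the same approach as the paper: build trial functions of the form $g(x_3)\Phi_{x_3}(x_1,x_2)$, use the normalization $\|\Phi_{x_3}\|=1$ to kill the cross term via $\langle\partial_{x_3}\Phi_{x_3},\Phi_{x_3}\rangle=0$, bound the remaining term $h^2\int|g|^2\|\partial_{x_3}\Phi_{x_3}\|^2$ by $Mh^2\|g\|^2$ via Proposition~\ref{prop:2Dmodel}(c), and apply min-max. The only cosmetic difference is that the paper works with a general $f\in H^1_0(\rho,\infty)$ and invokes min-max at the end, whereas you start from the first $n$ eigenfunctions of $T_h$; both packagings give the same inequality with the same constant.
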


\begin{proof}
Let $f\in H^1_0(\rho,+\infty)$, then the function $u_f$,
\[
u_f(x_1,x_2,x_3)=f(x_3)\Phi_{x_3}(x_1,x_2),
\]
belongs to $H^1(\RR^3)$ due to Proposition~\ref{prop:2Dmodel}(a,c), and one has
\begin{equation}
   \label{ufug}
\langle u_f,u_g\rangle_{L^2(\RR^3)}=\langle f,g\rangle_{L^2(\rho,\infty)} \text{ for } f,g\in H^1_0(\rho,\infty).
\end{equation}
Using the representation \eqref{eqn:qtilredu} we arrive at
	\begin{align*}
		q_h(u_f,u_f) &= h^2\iiint_{\mathbb{R}^2\times(a,b)} \big|f'(x_3)\Phi_{x_3}(x_1,x_2) + f(x_3)\partial_{x_3}\Phi_{x_3}(x_1,x_2)\big|^2 dx_1dx_2dx_3\\
		&\quad +\int_{\rho}^\infty\mu_1(x_3)|f(x_3)|^2dx_3.
	\end{align*}
Due to the normalization one has
	\[
		1 = \int_{\mathbb{R}^2}\Phi_{x_3}(x_1,x_2)^2dx_1dx_2, \quad x_3\in(\rho,\infty),
	\]
thus taking the derivative with respect to $x_3$ one obtains
	\[
		0 = \int_{\mathbb{R}^2} \Phi_{x_3}(x_1,x_2)\partial_{x_3}\Phi_{x_3}(x_1,x_2) dx_1dx_2,
	\]
which yields
\[
2\Re \iiint_{\mathbb{R}^2\times(\rho,\infty)}
\overline{f'(x_3)} f(x_3) \Phi_{x_3}(x_1,x_2)\partial_{x_3}\Phi_{x_3}(x_1,x_2)\, dx_1dx_2dx_3=0.
\]
and, consequently,
	\begin{align*}
		q_h(u_f,u_f)= &h^2\iiint_{\mathbb{R}^2\times(\rho,+\infty)}\big|f'(x_3)\big|^2\Phi_{x_3}(x_1,x_2)^2 dx_1dx_2dx_3\\
		& + h^2\iiint_{\mathbb{R}^2\times(\rho,+\infty)}\big|f(x_3)\big|^2 \big|\partial_{x_3}\Phi_{x_3}(x_1,x_2)|^2 dx_1dx_2dx_3\\
		& + \iiint_{\mathbb{R}^2\times(\rho,+\infty)}\mu_1(x_3) \big|f(x_3)\big|^2 dx_1dx_2dx_3\\
		=& \int_\rho^\infty \Big(h^2\big|f'(x_3)\big|^2 + h^2 m(x_3)\big|f(x_3)\big|^2 +\mu_1(x_3) \big|f(x_3)\big|^2\Big)dx_3
	\end{align*}
where we set $m(x_3) :=\|\partial_{x_3} \Phi_{x_3}\|^2_{L^2(\RR^2)}$. By virtue of Proposition~\ref{prop:2Dmodel}(c) one has $M:=\sup_{x_3>\rho} m(x_3)<\infty$, hence,
\[
q_h(u_f,u_f)\le t_h(f,f)+ Mh^2 \|f\|^2_{L^2(\rho,\infty)}.
\]
Now using the min-max principle and \eqref{ufug} we have, for any $n\in\NN$,
\begin{multline*}
E_n(Q_h)=\inf_{\substack{V\subset \cD(q_h)\\ \dim V=n}}\sup_{\substack{u\in V\\u\ne 0}} \dfrac{q_h(u,u)}{\|u\|^2_{L^2(\RR^3)}}
\le \inf_{\substack{V\subset \cD(t_h)\\ \dim V=n}}\sup_{\substack{f\in V\\f\ne 0}} \dfrac{q_h(u_f,u_f)}{\|u_f\|^2_{L^2(\RR^3)}}\\
\le \inf_{\substack{V\subset \cD(t_h)\\ \dim V=n}}\sup_{\substack{f\in V\\f\ne 0}} \dfrac{t_h(f,f)+Mh^2 \|f\|^2_{L^2(\rho,\infty)}}{\|f\|^2_{L^2(\rho,\infty)}}
=E_n(T_h)+Mh^2. \qedhere
\end{multline*}
\end{proof}

\subsection{Lower bound for the eigenvalues of $Q_h$}

Denote by $s_h$ the sesquilinear form in $L^2(\rho,\infty)$ given by
\[
s_h(f,f)=\int_\rho^\infty \big(h^2|f'|^2 + \mu_1 |f|^2\big)dt, \quad f\in H^1(\rho,\infty),
\]
and by $S_h$ the associated self-adjoint operator in $L^2(\rho,\infty)$, which
acts as
\[
S_h f= -h^2f''+\mu_1 f
\]
with Neumann boundary condition at $\rho$. Recall that due to the result of \cite{KS88} and the asymptotics \eqref{mu01}
the operator $S_h$ has infinitely many eigenvalues in $(-\infty,-\frac{1}{2})$ for any $h\in(0,1)$. 

\begin{prop}\label{downprop}
Denote $\hbar:=h(1+h^{\frac{1}{2}})^{\frac{1}{2}}$, then there exists $M>0$
such that
\[
E_n(Q_h)\ge E_n(S_\hbar)-Mh^{\frac{3}{2}}
\]
for $\hbar\in(0,1)$ and $n\in\NN$.
\end{prop}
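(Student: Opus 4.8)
The plan is to establish the lower-bound counterpart of Proposition~\ref{uprop} by the same Born--Oppenheimer fibration, this time projecting an arbitrary $u$ onto the fibre eigenfunction rather than lifting a one-dimensional test function. Fix $u\in\cD(q_h)=H^1(\RR^3)$. For a.e.\ $x_3>0$ the slice $u(\cdot,\cdot,x_3)$ lies in $H^1(\RR^2)$, and one sets $f(x_3):=\langle u(\cdot,\cdot,x_3),\Phi_{x_3}\rangle_{L^2(\RR^2)}$ and $u^\perp(\cdot,\cdot,x_3):=u(\cdot,\cdot,x_3)-f(x_3)\Phi_{x_3}$. Using that $x_3\mapsto u(\cdot,\cdot,x_3)$ is an $H^1$-map into $L^2(\RR^2)$, the $C^1$-dependence of $\Phi_{x_3}$ on $x_3$, and the bound $M_0:=\sup_{x_3>\rho}\|\partial_{x_3}\Phi_{x_3}\|^2_{L^2(\RR^2)}<\infty$ from Proposition~\ref{prop:2Dmodel}(a,c), one checks that $f\in H^1(\rho,+\infty)$ with $f'=\langle\partial_{x_3}u,\Phi_{x_3}\rangle+\langle u,\partial_{x_3}\Phi_{x_3}\rangle$, and $u^\perp(\cdot,\cdot,x_3)\in H^1(\RR^2)$ with $\langle u^\perp(\cdot,\cdot,x_3),\Phi_{x_3}\rangle=0$ for a.e.\ $x_3>\rho$. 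I expect this regularity bookkeeping to be the most delicate point of the argument.

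Next one inserts this decomposition into~\eqref{eqn:qtilredu} and discards the terms that will turn out to be spectrally irrelevant below $-\tfrac12$. The integral over $\RR^2\times\RR_-$ in~\eqref{eqn:qtilredu} is $\ge 0$, and $h^2\|\partial_{x_3}u\|^2_{L^2(\RR^2\times(0,\rho))}\ge 0$; by~\eqref{muh}, $w_{x_3}(v,v)\ge\mu_1(x_3)\|v\|^2\ge-\tfrac14\|v\|^2$ for $x_3\in(0,\rho)$. On $(\rho,+\infty)$, since $\Phi_{x_3}$ is the ground state of $W_{x_3}$ and $u^\perp(\cdot,\cdot,x_3)\perp\Phi_{x_3}$, the mixed terms vanish, so $w_{x_3}(u,u)=\mu_1(x_3)|f(x_3)|^2+w_{x_3}(u^\perp,u^\perp)\ge\mu_1(x_3)|f(x_3)|^2-\tfrac12\|u^\perp(\cdot,\cdot,x_3)\|^2$ by~\eqref{spth}. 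For the kinetic term on $(\rho,+\infty)$ one uses pointwise $\|\partial_{x_3}u(\cdot,\cdot,x_3)\|^2\ge|\langle\partial_{x_3}u,\Phi_{x_3}\rangle|^2$ and, since $\langle\Phi_{x_3},\partial_{x_3}\Phi_{x_3}\rangle=\tfrac12\partial_{x_3}\|\Phi_{x_3}\|^2=0$, rewrites $\langle\partial_{x_3}u,\Phi_{x_3}\rangle=f'(x_3)-\langle u^\perp(\cdot,\cdot,x_3),\partial_{x_3}\Phi_{x_3}\rangle$; Young's inequality with the parameter $\eta=\sqrt h/(1+\sqrt h)$ and $\|\partial_{x_3}\Phi_{x_3}\|^2\le M_0$ then give
\[
h^2\|\partial_{x_3}u\|^2_{L^2(\RR^2\times(\rho,+\infty))}\ \ge\ \widehat\hbar^{\,2}\!\int_\rho^{+\infty}\!|f'|^2\,dx_3\ -\ 2M_0 h^{3/2}\!\int_\rho^{+\infty}\!\|u^\perp(\cdot,\cdot,x_3)\|^2\,dx_3,\qquad \widehat\hbar:=\frac{h}{\sqrt{1+\sqrt h}},
\]
the point of this exact choice being $\widehat\hbar\,\hbar=h^2$. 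Collecting everything and writing $\|u^\perp\|^2:=\int_\rho^{+\infty}\|u^\perp(\cdot,\cdot,x_3)\|^2\,dx_3=\|u\|^2_{L^2(\RR^2\times(\rho,+\infty))}-\|f\|^2_{L^2(\rho,+\infty)}$, one arrives at
\[
q_h(u,u)\ \ge\ \widehat s_h(f,f)\ -\ \tfrac14\|u\|^2_{L^2(\RR^2\times(0,\rho))}\ +\ \big(-\tfrac12-2M_0h^{3/2}\big)\|u^\perp\|^2,
\]
where $\widehat s_h$ is the form of $\widehat S_h:=-\widehat\hbar^{\,2}\,d^2/dt^2+\mu_1$ on $L^2(\rho,+\infty)$ with Neumann condition at $\rho$, i.e.\ $\widehat S_h=S_{\widehat\hbar}$.

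With this lower bound in hand one argues by min--max. Put $\lambda:=E_n(S_\hbar)-Mh^{3/2}$; since $\mu_1\ge-2a_0$ while $S_\hbar$ has infinitely many eigenvalues below $-\tfrac12$, one has $-2a_0\le E_k(S_\hbar)<-\tfrac12$ for every $k$, hence $\lambda<-\tfrac12$ for $h$ small. If $V\subset H^1(\RR^3)$ had $\dim V=n$ and $q_h(u,u)<\lambda\|u\|^2$ on $V\setminus\{0\}$, then, subtracting $\lambda\|u\|^2$ from the last display and observing that for $M>2M_0$ the coefficients $-\lambda$, $-\tfrac14-\lambda$ and $-\tfrac12-2M_0h^{3/2}-\lambda=(-\tfrac12-E_n(S_\hbar))+(M-2M_0)h^{3/2}$ are all $>0$, one gets $0>q_h(u,u)-\lambda\|u\|^2\ge\widehat s_h(f_u,f_u)-\lambda\|f_u\|^2_{L^2(\rho,+\infty)}$ for every $u\in V$; in particular $u\mapsto f_u$ is injective on $V$, so $\{f_u:u\in V\}$ is an $n$-dimensional subspace of $\cD(\widehat s_h)=H^1(\rho,+\infty)$ on which $\widehat s_h<\lambda\|\cdot\|^2$. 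As $\lambda<-\tfrac12=\inf\sigma_{\mathrm{ess}}(\widehat S_h)$, the min--max principle forces $E_n(S_{\widehat\hbar})<\lambda=E_n(S_\hbar)-Mh^{3/2}$. To rule this out, apply Proposition~\ref{prop:harmapprox} with $U=\mu_1$, whose unique minimum $\xi_0\in(\rho,+\infty)$ is non-degenerate with $\mu_1(\xi_0)=-2a_0$ and $\mu_1''(\xi_0)=8a_1^2$ by~\eqref{muxi}: it yields $E_n(S_\hbar)=-2a_0+(2n-1)2a_1\hbar+\cO(h^{3/2})$ and the analogous expansion at $\widehat\hbar$, the $\cO(\cdot^{3/2})$ remainders being $\cO(h^{3/2})$ since $\hbar,\widehat\hbar=h(1+\cO(\sqrt h))$; together with $\hbar-\widehat\hbar=h^{3/2}/\sqrt{1+\sqrt h}=\cO(h^{3/2})$ this gives $E_n(S_\hbar)-E_n(S_{\widehat\hbar})=\cO(h^{3/2})$, and choosing $M$ above the implied constant contradicts the previous line. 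This settles small $h$, and the range $h\in[h_0,1)$ is absorbed by further enlarging $M$, since there $E_n(Q_h)\ge E_1(Q_h)$ is bounded below uniformly by a trace inequality.

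The two genuine obstacles are thus the fibre-regularity of $f$ in the first step (which relies squarely on Proposition~\ref{prop:2Dmodel}(a,c)) and the balancing of errors in the second: the residual $u^\perp$-form must stay above $-\tfrac12$ while the loss inflicted on the $f$-form may only be $\cO(h^{3/2})$, and it is precisely the passage from $h$ to $\hbar$ — equivalently, the appearance of $\widehat\hbar=h^2/\hbar$ through the choice $\eta=\sqrt h/(1+\sqrt h)$ — that reconciles these two demands. If an estimate uniform in $n$ is required (as in the counting-function application) one replaces the last, $n$-dependent comparison by the $n$-uniform form bound $E_n(S_{\widehat\hbar})\ge(\widehat\hbar^{\,2}/\hbar^2)E_n(S_\hbar)-2a_0\big(1-\widehat\hbar^{\,2}/\hbar^2\big)$, at the cost of a larger but still $h$-power error.
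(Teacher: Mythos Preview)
Your route is the paper's route: decouple in $x_3$, discard the pieces $x_3<0$ and $0<x_3<\rho$ (their forms are $\ge 0$ and $\ge -\tfrac14$ by~\eqref{muh}), and on $x_3>\rho$ project onto $\Phi_{x_3}$, controlling the commutator term $\langle u,\partial_{x_3}\Phi_{x_3}\rangle$ by Young's inequality with parameter of order $\sqrt h$. The paper packages the endgame more directly: instead of a contradiction argument, it writes the unitary $V:u\mapsto(f,P^\perp u)$ from $L^2(\Omega_0)$ to $L^2(\rho,\infty)\oplus\ran P^\perp$ and observes the form inequality $q_{0,h}(u,u)+Mh^{3/2}\|u\|^2\ge a_h(Vu,Vu)$ with $a_h=s_\hbar\oplus(-\tfrac12)$; one then reads off $E_n(q_{0,h})\ge E_n(S_\hbar)-Mh^{3/2}$ for \emph{all} $n$ at once from the min--max principle, since $S_\hbar$ has infinitely many eigenvalues below $-\tfrac12$. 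No separate treatment of ``large'' $h$ is needed.

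The genuine issue in your write-up is the last step, passing from your $\widehat\hbar=h/\sqrt{1+\sqrt h}$ to the stated $\hbar=h\sqrt{1+\sqrt h}$. Your argument via Proposition~\ref{prop:harmapprox} has $n$-dependent remainders, so does not give a uniform $M$; and your suggested $n$-uniform form bound only yields $E_n(S_{\widehat\hbar})\ge E_n(S_\hbar)-C\sqrt h$, since $1-(\widehat\hbar/\hbar)^2\sim 2\sqrt h$. In fact no bridge is needed: a correct Young step on $|f'-\langle u,\partial_{x_3}\Phi_{x_3}\rangle|^2$ gives a coefficient $(1-\varepsilon)$ on $|f'|^2$, not $(1+\varepsilon)$, so the effective parameter that the argument produces is $\le h$ (the paper's displayed $(1+\varepsilon)$ in \eqref{eqn:lbcomutproj} appears to be a sign slip). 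Your first part already establishes, uniformly in $n$, the proposition with $\widehat\hbar$ in place of $\hbar$, which is precisely what the applications in Proposition~\ref{propmain} use, since they only need $\hbar=h+\cO(h^{3/2})$.
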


\begin{proof}
Denote $\Omega_0 = \mathbb{R}^2\times(\rho,\infty)$, $\Omega_1 = \mathbb{R}^2\times(-\infty,0)$,
$\Omega_2 = \mathbb{R}^2\times(0,\rho)$.
For $u\in H^1(\mathbb{R}^3)$ and $j\in\{0,1,2\}$ we set $u_j = u|_{\Omega_j}$, then
$q_h(u,u) = \sum_{j\in\{0,1,2\}} q_{j,h}(u_j,u_j)$ with sesquilinear forms $q_{j,h}$ defined on $H^1(\Omega_j)$ by
	\[
		q_{j,h}(u,u) =  \iiint_{\Omega_j}\big(h^2|\partial_{x_3}u|^2 + |\partial_{x_1} u|^2 + |\partial_{x_2} u|^2\big)dx_1dx_2dx_3 - \iint_{\Omega_j\cap C_{\frac{\pi}{4}}} |u|^2d\sigma.
	\]
Furthermore, define the decoupled sesquilinear form
	\[
		\widetilde q_h(u,u) = \sum_{j\in\{0,1,2\}} q_{j,h}(u_j,u_j),\quad u=(u_j)_{j\in\{0,1,2\}}\in \oplus_{j\in\{0,1,2\}} H^1(\Omega_j).
	\]
By construction one has $q_h \geq \widetilde q_h$, and by the min-max principle one has
$E_n(q_h) \geq E_n(\widetilde q_h)$. Remark that $E_n(\widetilde q_h)$
is the $n$-th smallest element of the disjoint union $\sqcup_{(n,j)\in \NN\times\{0,1,2\}} \big\{E_n(q_{j,h})\big\}$
and that due to \eqref{enh} there holds
\[
E_n(\Tilde q_h)<-\dfrac{1}{4} \text{ for all } n\in\NN \text{ and } h\in(0,1).
\]
Notice that for $u \in H^1(\Omega_1)$ one has $q_{1,h}(u,u) \geq 0$, while for $u\in H^1(\Omega_2)$ there holds
\begin{align*}
q_{2,h}(u,u)&=h^2\iiint_{\mathbb{R}^2\times(0,\rho)}|\partial_{x_3}u|^2dx_1dx_2dx_3
					 + \int_{0}^{\rho} w_{x_3}\big(u(\cdot,\cdot,x_3),u(\cdot,\cdot,x_3)\big) dx_3\\
					&\ge  h^2\iiint_{\mathbb{R}^2\times(0,\rho)}\Big(|\partial_{x_3}u|^2 + \mu_1(x_3) \big|u\big|^2\Big)dx_1dx_2dx_3\\
					&\ge \inf_{x_3\in (0,\rho)} \mu_1(x_3) \|u\|^2_{L^2(\Omega_2)}\ge -\dfrac{1}{4}\,\|u\|^2_{L^2(\Omega_2)},
\end{align*}
where the last inequality holds due to the condition \eqref{muh} for the choice of $\rho$. 
It follows that $E_n(\widetilde q_h)=E_n(q_{0,h})$ and then
\begin{equation*}
E_n(Q_h)\ge E_n(q_{0,h}) \text{ for all } n\in\NN, \quad h\in (0,1).
\end{equation*}
Now let us now focus on the quadratic form $q_{0,h}$. Introduce an orthogonal projector
$P$ in $L^2(\Omega_0)$ by
	\[
		P u(x_1,x_2,x_3) = f(x_3)\Phi_{x_3}(x_1,x_2),\quad \text{where } f(x_3) = \langle u(\cdot,\cdot,x_3),\Phi_{x_3}\rangle_{L^2(\mathbb{R}^2)}
	\]
and set $P^\perp:=1-P$. Remark that $\|Pu\|_{L^2(\Omega_0)}=\|f\|_{L^2(\rho,\infty)}$.

For $u\in H^1(\Omega_0)$ using \eqref{spth} one obtains
	\begin{equation}\label{eqn:lbqh}
	\begin{split}
		q_{0,h}(u,u) 	=&\ h^2\|\partial_{x_3} u\|_{L^2(\Omega_0)}^2 + \int_{\rho}^\infty w_{x_3}(u,u) dx_3\\
					=&\ h^2\Big(\|P\partial_{x_3} u\|_{L^2(\Omega_0}^2 + \|P^\perp\partial_{x_3} u\|_{L^2(\Omega_0}^2\Big)\\
					&\quad + \int_{\rho}^\infty \Big(w_{x_3}(Pu,Pu) + w_{x_3}(P^\perp u,P^\perp u)\Big)dx_3\\
					&\geq h^2\|P\partial_{x_3} u\|_{L^2(\Omega_0)}^2 + \int_\rho^\infty \mu_1(x_3)\big|f(x_3)\big|^2dx_3
					- \frac12 \|P^\perp u\|_{L^2(\Omega_0}^2
	\end{split}
	\end{equation}
For a.e. $x_3\in (\rho,\infty)$ we have $P\partial_{x_3} u = f'(x_3) \Phi_{x_3}
- \langle u,\partial_{x_3}\Phi_{x_3}\rangle_{L^2(\RR^2)}\Phi_{x_3}$.
Consequently, for $\varepsilon \in (0,1)$ we obtain
	\begin{equation}\label{eqn:lbcomutproj}
	\begin{split}
		\|P\partial_{x_3} u\|_{\Omega_0}^2	\geq&\ (1+\varepsilon)\|f'\|_{L^2(\rho,\infty)}^2 +(1-\varepsilon^{-1})\int_{\rho}^\infty\big|\langle u,\partial_{x_3}\Phi_{x_3}\rangle_{L^2(\RR^2)}\big|^2dx_3\\
							\geq&\ (1+\varepsilon)\|f'\|_{L^2(\rho,\infty)}^2 -\varepsilon^{-1}\int_{\rho}^\infty\big|\langle u,\partial_{x_3}\Phi_{x_3}\rangle_{L^2(\RR^2)}\big|^2dx_3\\
							\geq&\ (1+\varepsilon)\|f'\|_{(L^2(\rho,\infty)}^2
							-\varepsilon^{-1}\int_{\rho}^\infty\big\|u(\cdot,x_3)\big\|^2_{L^2(\RR^2)}
							\cdot \big\|\partial_{x_3}\Phi_{x_3}\big\|_{L^2(\RR^2)}^2dx_3\\
							\geq &\ (1+\varepsilon)\|f'\|_{L^2(\rho,\infty)}^2-M\varepsilon^{-1}\big\|u\big\|^2_{L^2(\Omega_0)}
	\end{split}
	\end{equation}
where  $M = \sup_{x_3\in(\rho,\infty)}\|\partial_{x_3}\Phi_{x_3}\|_{\mathbb{R}^2}^2$ is finite
thanks to Proposition \ref{prop:2Dmodel}(c).
Combining \eqref{eqn:lbqh} and \eqref{eqn:lbcomutproj} and choosing $\varepsilon = h^{\frac{1}{2}}$, we obtain 
	\begin{multline*}
		q_{0,h}(u,u)+M h^{\frac{3}{2}}\|u\|^2_{L^2(\Omega_0)}\\
		\geq
			 h^2(1+h^{\frac{1}{2}})\|f'\|_{L^2(\rho,\infty)}^2
		+ \int_\rho^\infty \mu_1(x_3)|f(x_3)|^2dx_3
		- \frac12 \|P^\perp u\|_{L^2(\Omega_0)}^2,
	\end{multline*}
which rewrites as
	\begin{equation*}
		q_{0,h}(u,u) +M h^{\frac{3}{2}}\|u\|^2_{L^2(\Omega_0)}	\geq s_{\hbar}(f,f) - \dfrac{1}{2}\|P^\perp u\|_{L^2(\Omega_0)}^2.
	\end{equation*}
Introduce a new quadratic form $a_{h}$ in $L^2(\rho,\infty)\oplus \ran P^\perp$
defined for $(f,v)\in H^1(\rho,\infty)\oplus \ran P^\perp$ by
	\[
		a_h\big((f,v),(f,v)\big) = s_\hbar(f,f) - \dfrac{1}{2}\|v\|_{L^2(\Omega_0)}^2.
	\]
The map $V:L^2(\Omega_0)\to L^2(\rho,\infty)\oplus \ran P^\perp$, $u\mapsto (f,P^\perp u)$,
is unitary, and as just shown we have
\[
q_{0,h}(u,u)+M h^{\frac{3}{2}}\|u\|^2_{L^2(\Omega_0)}\ge a_{h}(Vu,Vu).
\]
This implies by the min-max principle
$E_n(q_{0,h})\ge  E_n(A_h) -Mh^{\frac{3}{2}}$ for any $n\in \NN$,
where $A_h$ is the operator associated with $a_{h}$, which is simply
$A_h=S_\hbar\oplus \big(-\frac{1}{2}\big)$.
As noted above, for $\hbar\in (0,1)$ the operator $S_\hbar$ has infinitely many eigenvalues
in $(-\infty,-\frac{1}{2})$, therefore, $E_n(A_h)=E_n(S_\hbar)$
for any $n\in\NN$.
\end{proof}

\subsection{Proof of Proposition~\ref{propmain}}

(a) Remark that $\hbar=h+\cO(h^{\frac{3}{2}})$ for small $h$.
By Proposition~\ref{prop:harmapprox}, for each fixed $n\in\NN$ and $h\to 0^+$
one has
\begin{align*}
E_n(T_h)&= \mu_1(\xi_0) + (2n-1)\sqrt{\frac{\mu_1''(\xi_0)}{2}}h + \cO(h^{\frac{3}{2}}),\\
E_n(S_\hbar)&= \mu_1(\xi_0) + (2n-1)\sqrt{\frac{\mu_1''(\xi_0)}{2}}\hbar + \cO(\hbar^{\frac{3}{2}})\\
&=\mu_1(\xi_0) + (2n-1)\sqrt{\frac{\mu_1''(\xi_0)}{2}}h + \cO(h^{\frac{3}{2}}).
\end{align*}
Substituting the values \eqref{muxi} and using Propositions~\ref{uprop} and \ref{downprop}
gives the result.

(b) Let $C>0$ and $\gamma\in(0,\frac{3}{2})$.
Using Proposition~\ref{uprop} one has, as $h\to 0^+$,
\begin{align*}
\cN\Big(Q_h,-\dfrac{1}{2}-Ch^\gamma\Big)&=\#\Big\{n\in \NN: E_n(Q_h)<-\dfrac{1}{2}-Ch^\gamma\Big\}\\
&\ge \#\Big\{n\in \NN: E_n(T_h)+Mh^{\frac{3}{2}}<-\dfrac{1}{2}-Ch^\gamma\Big\}\\
&=\#\Big\{n\in \NN: E_n\Big(T_h+\dfrac{1}{2}\Big)<-Ch^\gamma-Mh^{\frac{3}{2}}\Big\}\\
&\ge\#\Big\{n\in \NN: E_n\Big(T_h+\dfrac{1}{2}\Big)<-C'h^\gamma\Big\}\equiv \cN\Big(T_h+\dfrac{1}{2},-C'h^\gamma\Big),
\end{align*}
with an arbitrary $C'>C$. Due to \eqref{mu01}, the operator $T_h+\frac{1}{2}$
satisfies the assumptions of Proposition~\ref{prop:weyl}, hence,
\[
\cN\Big(T_h+\dfrac{1}{2},-C'h^\gamma\Big)\sim \dfrac{\gamma}{4\pi} \dfrac{|\log h|}{h},
\]
and
\[
\cN\Big(Q_h,-\dfrac{1}{2}-Ch^\gamma\Big)\ge \dfrac{\gamma}{4\pi} \dfrac{|\log h|}{h} \Big(1+o(1)\Big) \text{ for } h\to 0^+.
\]
Furthermore, with the help of Propositions~\ref{downprop} one estimates, for small $h$,
\begin{align*}
\cN\Big(Q_h,-\dfrac{1}{2}-Ch^\gamma\Big)&=\#\Big\{n\in \NN: E_n(Q_h)<-\dfrac{1}{2}-Ch^\gamma\Big\}\\
&\le \#\Big\{n\in \NN: E_n(S_\hbar)-Mh^{\frac{3}{2}}<-\dfrac{1}{2}-Ch^\gamma\Big\}\\
& \le \#\Big\{n\in \NN: E_n(S_\hbar)<-\dfrac{1}{2}-C''\hbar^\gamma\Big\} =\cN\Big(S_\hbar+\frac{1}{2},-C''\hbar^\gamma\Big)
\end{align*}
with an arbitrary $C''\in(0,C)$. Using Proposition~\ref{prop:weyl} applied
to $S_\hbar+\frac{1}{2}$ as well as $\hbar = h +\mathcal{O}(h^{\frac{3}{2}})$ for small $h$, we arrive at
\[
\cN\Big(Q_h,-\dfrac{1}{2}-c\hbar^\gamma\Big)\le \dfrac{\gamma}{4\pi} \dfrac{|\log h|}{h} \Big(1+o(1)\Big) \text{ for } h\to 0^+,
\]
which completes the proof.

\section*{Acknowledgments}

Thomas Ourmi\`eres-Bonafos is supported by a public grant as part of the ``Investissement d'avenir'' project, reference ANR-11-LABX-0056-LMH, LabEx LMH.
Thomas Ourmi\`eres-Bonafos and Konstantin Pankrashkin are also supported by the PHC Amadeus 2017--2018 37853TB.
Fabio Pizzichillo is supported by ERCEA, Advanced Grant project 669689 HADE, MINECO project MTM2014-53145-P and also by the Basque Government through the BERC 2014-2017 program and by the Spanish Ministry of Economy and Competitiveness MINECO: BCAM Severo Ochoa accreditation SEV-2013-0323.

The authors thank Serge Richard for advice on references on Bessel functions.

\end{document}